\theoremstyle{plain}
\newtheorem{theorem}{Theorem}[section]
\newtheorem{proposition}[theorem]{Proposition}
\newtheorem{lemma}[theorem]{Lemma}
\newtheorem{assumption}[theorem]{Assumption}
\theoremstyle{definition}
\newtheorem{definition}[theorem]{Definition}
\newtheorem*{remark}{Remark}
\newcommand{\RR}{\mathbb{R}}
\newcommand{\NN}{\mathbb{N}}
\newcommand{\ZZ}{\mathbb{Z}}
\newcommand{\cC}{\mathcal{C}}
\newcommand{\drm}{\mathrm{d}}
\newcommand{\euler}{\mathrm{e}}
\DeclareMathOperator{\supp}{supp}
\DeclareMathOperator{\Deg}{Deg}
\DeclareMathOperator{\spec}{spec}
\DeclareMathOperator{\arsinh}{arsinh}
\DeclareMathOperator{\Eins}{\mathbf{1}}
\newcommand{\eat}[1]{}
\let\oldint\int
\renewcommand{\int}{\oldint\limits}
\newcommand{\Hmm}[1]{\leavevmode{\marginpar{\tiny%
			$\hbox to 0mm{\hspace*{-0.5mm}$\leftarrow$\hss}%
			\vcenter{\vrule depth 0.1mm height 0.1mm width \the\marginparwidth}%
			\hbox to
			0mm{\hss$\rightarrow$\hspace*{-0.5mm}}$\\\relax\raggedright #1}}}
\numberwithin{equation}{section}
\begin{document}

\title{Anchored heat kernel upper bounds on graphs with unbounded geometry and anti-trees}
\author{Matthias Keller\thanks{matthias.keller@uni-potsdam.de} }
\author{Christian Rose\thanks{christian.rose@uni-potsdam.de}}
\affil[]{Institut f\"ur Mathematik, Universit\"at Potsdam, 		14476  Potsdam, Germany}
\maketitle

\begin{abstract}
We derive Gaussian heat kernel bounds on graphs with respect to a fixed origin for large times
under the assumption of a Sobolev inequality and volume doubling on large balls.
The upper bound from our previous work \cite{KellerRose-22a} is affected by a new correction term measuring the distance to the origin. The main result is then applied to anti-trees with unbounded vertex degree, yielding Gaussian upper bounds for this class of graphs for the first time. In order to prove this, we show that isoperimetric estimates with respect to intrinsic metrics yield Sobolev inequalities. Finally, we prove that anti-trees are Ahlfors regular and that they satisfy an isoperimetric inequality of a larger dimension. 
\\
	
	\noindent \textbf{Keywords:} graph, heat kernel, Gaussian bound, unbounded geometry, anti-tree
	\\
	\noindent \textbf{2020 MSC:} 39A12, 35K08, 60J74
\end{abstract}

The focus of the present note lies on Gaussian upper bounds for heat kernels on graphs with possibly unbounded geometry. Seminal works go back to Davies and Delmotte for graphs with bounded geometry, \cite{Davies-93a, Delmotte-99}. 
In contrast, we will be dealing with graphs which do not satisfy any a priori boundedness assumption on the local geometry of the graph.
First results in this direction have been obtained in \cite{Folz-11, AndresDS-16, BarlowChen-16, BauerHuaYau-17,ChenKW-20b}.
Recently, the authors obtained  Gaussian bounds for heat kernels for large times on graphs with unbounded geometry. The assumptions include Sobolev inequalities and volume doubling  in large balls in terms of intrinsic metrics 
\cite{KellerRose-22a}.
\\
Here, we derive an anchored version of these recent Gaussian upper bounds of the heat kernel $p$ of locally finite weighted discrete graphs over a discrete measure space $(X,m)$, cf.~Section~\ref{section:anchoredgaussian}. Applied to anti-trees, our results yield for the first time Gaussian upper bounds for their heat kernels and, hence, provide explicit examples of graphs with unbounded vertex degree.
\\
In order to highlight our contribution it is instructive to recall a simplified version of the main result Theorem~6.1 from \cite{KellerRose-22a}. Assume that the graph is equipped with an intrinsic metric $\rho$ of jump size one. Moreover, all large balls around $x,y\in X$ are supposed to satisfy a Sobolev inequality with exponent $n>2$ and volume doubling. 
Then, the authors obtained  Gaussian upper bounds of the form
\[
p_t(x,y)
\leq 
f(x,y,t)\frac{\left(1\vee ( \sqrt{t^2+\rho(x,y)^2}-t)\right)^{\frac{n}{2}}}{m(B_x(\sqrt t))m(B_y(\sqrt t))}\euler^{-\zeta_1(\rho(x,y),t)}
\]
for large $t>0$.
The function $\zeta_1$ is optimal as observed by Davies and satisfies 
\[
\zeta_1(r,t)\sim \frac{r^2}{2t}, \quad t\to\infty, \ r>0,
\]
meaning that the fraction of the left- and right-hand side converges to one. 
Moreover, the polynomial correction term has different behavior as $t\to 0$ than the corresponding term on manifolds, cf.~\cite{Grigoryan-09}. The function $f$ measures the unboundedness of the geometry about $x$ and $y$. It is bounded if, e.g., $m$ is the normalizing measure, or the $L^p$-means of vertex degree and inverted measure in balls grow at most exponentially. Note that no a priori boundedness of the vertex degree or the measure is assumed. 
\\

The aim of this note is twofold. 
\\
First, we will relax the conditions on the vertices. Instead of assuming that large balls around the vertices $x$ and $y$ fulfill Sobolev and volume doubling, we rather impose these properties  about a root vertex $o\in X$. This yields Gaussian upper bounds for all vertices with an additional polynomial correction term measuring the distance to $o$. In fact, Theorem~\ref{thm:main1} states that for all $x,y\in X$ and $t> 0$ large
\begin{equation*}
p_t(x,y)
\leq 
f_o(x,y,t)\left(1+\frac{\rho(o,x)^2+\rho(o,y)^2}{t}\right)^{\frac{n}{2}}
\frac{\left(1\vee ( \sqrt{t^2+\rho(x,y)^2}-t)\right)^{\frac{n}{2}}}{m(B_o(\sqrt t))}\euler^{-\zeta_1(\rho(x,y),t)}.
\end{equation*}
The function $f_o$ measures the unboundedness of the geometry with respect to  $o$, incorporating the distance from $x$ and $y$. The function $f_o$ is bounded if $m$ is the normalizing measure or weighted $L^p$-means of the vertex degree and inverted measure about $o$ grow at most exponentially.  
\\
The above statements require Sobolev and volume doubling for all large balls. Our main result Theorem~\ref{thm:main1} even yields a localized version involving the spectral bottom of the Laplacian. To the best of our knowledge, this is the first variant of this result in this form on graphs.
\\
For degenerating parabolic equations  in $\RR^n$, Zhikov imposed conditions at the origin and derived off-diagonal Gaussian upper bounds, \cite{Zhikov-13}. A similar approach was used for elliptic operators on $\ZZ^n$ with unbounded but integrable weights in \cite{MourratOtto-16, AndresDS-16}. However, the latter articles do not provide the optimal Gaussian estimate.
\\

\begin{figure}[ht]
  \begin{minipage}[b]{\linewidth}
  \centering
\begin{tikzpicture}[scale=.6, yscale=.7]
  dot/.style={fill=blue,circle,minimum size=2pt}]
\tikzstyle{every node}=[draw, shape=circle, fill=black, inner sep=.5pt]

 \node at (0, 0)   (a) {};

    \node at (3, 2)   (b1) {};
    \node at (3, 0)  (b2)     {};
    \node at (3, -2)  (b3)     {};

\node at (6, 3)   (c1) {};
    \node at (6,2)  (c2)     {};
    \node at (6, 1)  (c3)     {};
    \node at (6, 0)   (c4) {};
    \node at (6, -1)  (c5)     {};
    \node at (6, -2)  (c6)     {};
    \node at (6,-3) (c7) {};

\node at (11, 6)   (d1) {};
    \node at (11,5)  (d2)     {};
    \node at (11, 4)  (d3)     {};
    \node at (11, 3)   (d4) {};
    \node at (11, 2)  (d5)     {};
    \node at (11, 1)  (d6)     {};
    \node at (11,0) (d7) {};
    \node at (11, -1)   (d8) {};
    \node at (11,-2)  (d9)     {};
    \node at (11, -3)  (d10)     {};
    \node at (11, -4)   (d11) {};
    \node at (11, -5)  (d12)     {};
    \node at (11, -6)  (d13)     {};

\node at (19, 9)   (e1) {};
\node at (19, 8)   (e2) {};
\node at (19, 7)   (e3) {};
\node at (19, 6)   (e4) {};
    \node at (19,5)  (e5)     {};
    \node at (19, 4)  (e6)     {};
    \node at (19, 3)   (e7) {};
    \node at (19, 2)  (e8)     {};
    \node at (19, 1)  (e9)     {};
    \node at (19,0) (e10) {};
    \node at (19, -1)   (e11) {};
    \node at (19,-2)  (e12)     {};
    \node at (19, -3)  (e13)     {};
    \node at (19, -4)   (e14) {};
    \node at (19, -5)  (e15)     {};
    \node at (19, -6)  (e16)     {};
    \node at (19, -7)   (e17) {};
\node at (19, -8)   (e18) {};
\node at (19, -9)   (e19) {};

\node at (21,0) (f15) {};
\node at (21.4,0) (f16) {};
\node at (21.8,0) (f17) {};


    \draw (a) -- (b1)   {};
 
   \draw(a) -- (b2)   {};
 
\draw (a)-- (b3) {};

\draw (b1) -- (c1) {};

\draw (b1) -- (c2) {};

\draw (b1) -- (c3) {};

\draw (b1) -- (c4) {};

\draw (b1) -- (c5) {};

\draw (b1) -- (c6) {};

\draw (b1) -- (c7) {};

\draw (b2) -- (c1) {};
\draw (b2) -- (c2) {};
\draw (b2) -- (c3) {};
\draw (b2) -- (c4) {};
\draw (b2) -- (c5) {};
\draw (b2) -- (c6) {};
\draw (b2) -- (c7) {};

\draw (b3) -- (c1) {};
\draw (b3) -- (c2) {};
\draw (b3) -- (c3) {};
\draw (b3) -- (c4) {};
\draw (b3) -- (c5) {};
\draw (b3) -- (c6) {};
\draw (b3) -- (c7) {};

\draw (c1) -- (d1) {};
\draw (c1) -- (d2) {};
\draw (c1) -- (d3) {};
\draw (c1) -- (d4) {};
\draw (c1) -- (d5) {};
\draw (c1) -- (d6) {};
\draw (c1) -- (d7) {};
\draw (c1) -- (d8) {};
\draw (c1) -- (d9) {};
\draw (c1) -- (d10) {};
\draw (c1) -- (d11) {};
\draw (c1) -- (d12) {};
\draw (c1) -- (d13) {};

\draw (c2) -- (d1) {};
\draw (c2) -- (d2) {};
\draw (c2) -- (d3) {};
\draw (c2) -- (d4) {};
\draw (c2) -- (d5) {};
\draw (c2) -- (d6) {};
\draw (c2) -- (d7) {};
\draw (c2) -- (d8) {};
\draw (c2) -- (d9) {};
\draw (c2) -- (d10) {};
\draw (c2) -- (d11) {};
\draw (c2) -- (d12) {};
\draw (c2) -- (d13) {};

\draw (c3) -- (d1) {};
\draw (c3) -- (d2) {};
\draw (c3) -- (d3) {};
\draw (c3) -- (d4) {};
\draw (c3) -- (d5) {};
\draw (c3) -- (d6) {};
\draw (c3) -- (d7) {};
\draw (c3) -- (d8) {};
\draw (c3) -- (d9) {};
\draw (c3) -- (d10) {};
\draw (c3) -- (d11) {};
\draw (c3) -- (d12) {};
\draw (c3) -- (d13) {};

\draw (c4) -- (d1) {};
\draw (c4) -- (d2) {};
\draw (c4) -- (d3) {};
\draw (c4) -- (d4) {};
\draw (c4) -- (d5) {};
\draw (c4) -- (d6) {};
\draw (c4) -- (d7) {};
\draw (c4) -- (d8) {};
\draw (c4) -- (d9) {};
\draw (c4) -- (d10) {};
\draw (c4) -- (d11) {};
\draw (c4) -- (d12) {};
\draw (c4) -- (d13) {};

\draw (c5) -- (d1) {};
\draw (c5) -- (d2) {};
\draw (c5) -- (d3) {};
\draw (c5) -- (d4) {};
\draw (c5) -- (d5) {};
\draw (c5) -- (d6) {};
\draw (c5) -- (d7) {};
\draw (c5) -- (d8) {};
\draw (c5) -- (d9) {};
\draw (c5) -- (d10) {};
\draw (c5) -- (d11) {};
\draw (c5) -- (d12) {};
\draw (c5) -- (d13) {};

\draw (c6) -- (d1) {};
\draw (c6) -- (d2) {};
\draw (c6) -- (d3) {};
\draw (c6) -- (d4) {};
\draw (c6) -- (d5) {};
\draw (c6) -- (d6) {};
\draw (c6) -- (d7) {};
\draw (c6) -- (d8) {};
\draw (c6) -- (d9) {};
\draw (c6) -- (d10) {};
\draw (c6) -- (d11) {};
\draw (c6) -- (d12) {};
\draw (c6) -- (d13) {};

\draw (c7) -- (d1) {};
\draw (c7) -- (d2) {};
\draw (c7) -- (d3) {};
\draw (c7) -- (d4) {};
\draw (c7) -- (d5) {};
\draw (c7) -- (d6) {};
\draw (c7) -- (d7) {};
\draw (c7) -- (d8) {};
\draw (c7) -- (d9) {};
\draw (c7) -- (d10) {};
\draw (c7) -- (d11) {};
\draw (c7) -- (d12) {};
\draw (c7) -- (d13) {};

\draw (d1) -- (e1) {};
\draw (d1) -- (e2) {};
\draw (d1) -- (e3) {};
\draw (d1) -- (e4) {};
\draw (d1) -- (e5) {};
\draw (d1) -- (e6) {};
\draw (d1) -- (e7) {};
\draw (d1) -- (e8) {};
\draw (d1) -- (e9) {};
\draw (d1) -- (e10) {};
\draw (d1) -- (e11) {};
\draw (d1) -- (e12) {};
\draw (d1) -- (e13) {};
\draw (d1) -- (e14) {};
\draw (d1) -- (e15) {};
\draw (d1) -- (e16) {};
\draw (d1) -- (e17) {};
\draw (d1) -- (e18) {};
\draw (d1) -- (e19) {};

\draw (d2) -- (e1) {};
\draw (d2) -- (e2) {};
\draw (d2) -- (e3) {};
\draw (d2) -- (e4) {};
\draw (d2) -- (e5) {};
\draw (d2) -- (e6) {};
\draw (d2) -- (e7) {};
\draw (d2) -- (e8) {};
\draw (d2) -- (e9) {};
\draw (d2) -- (e10) {};
\draw (d2) -- (e11) {};
\draw (d2) -- (e12) {};
\draw (d2) -- (e13) {};
\draw (d2) -- (e14) {};
\draw (d2) -- (e15) {};
\draw (d2) -- (e16) {};
\draw (d2) -- (e17) {};
\draw (d2) -- (e18) {};
\draw (d2) -- (e19) {};

\draw (d3) -- (e1) {};
\draw (d3) -- (e2) {};
\draw (d3) -- (e3) {};
\draw (d3) -- (e4) {};
\draw (d3) -- (e5) {};
\draw (d3) -- (e6) {};
\draw (d3) -- (e7) {};
\draw (d3) -- (e8) {};
\draw (d3) -- (e9) {};
\draw (d3) -- (e10) {};
\draw (d3) -- (e11) {};
\draw (d3) -- (e12) {};
\draw (d3) -- (e13) {};
\draw (d3) -- (e14) {};
\draw (d3) -- (e15) {};
\draw (d3) -- (e16) {};
\draw (d3) -- (e17) {};
\draw (d3) -- (e18) {};
\draw (d3) -- (e19) {};

\draw (d4) -- (e1) {};
\draw (d4) -- (e2) {};
\draw (d4) -- (e3) {};
\draw (d4) -- (e4) {};
\draw (d4) -- (e5) {};
\draw (d4) -- (e6) {};
\draw (d4) -- (e7) {};
\draw (d4) -- (e8) {};
\draw (d4) -- (e9) {};
\draw (d4) -- (e10) {};
\draw (d4) -- (e11) {};
\draw (d4) -- (e12) {};
\draw (d4) -- (e13) {};
\draw (d4) -- (e14) {};
\draw (d4) -- (e15) {};
\draw (d4) -- (e16) {};
\draw (d4) -- (e17) {};
\draw (d4) -- (e18) {};
\draw (d4) -- (e19) {};

\draw (d5) -- (e1) {};
\draw (d5) -- (e2) {};
\draw (d5) -- (e3) {};
\draw (d5) -- (e4) {};
\draw (d5) -- (e5) {};
\draw (d5) -- (e6) {};
\draw (d5) -- (e7) {};
\draw (d5) -- (e8) {};
\draw (d5) -- (e9) {};
\draw (d5) -- (e10) {};
\draw (d5) -- (e11) {};
\draw (d5) -- (e12) {};
\draw (d5) -- (e13) {};
\draw (d5) -- (e14) {};
\draw (d5) -- (e15) {};
\draw (d5) -- (e16) {};
\draw (d5) -- (e17) {};
\draw (d5) -- (e18) {};
\draw (d5) -- (e19) {};

\draw (d6) -- (e1) {};
\draw (d6) -- (e2) {};
\draw (d6) -- (e3) {};
\draw (d6) -- (e4) {};
\draw (d6) -- (e5) {};
\draw (d6) -- (e6) {};
\draw (d6) -- (e7) {};
\draw (d6) -- (e8) {};
\draw (d6) -- (e9) {};
\draw (d6) -- (e10) {};
\draw (d6) -- (e11) {};
\draw (d6) -- (e12) {};
\draw (d6) -- (e13) {};
\draw (d6) -- (e14) {};
\draw (d6) -- (e15) {};
\draw (d6) -- (e16) {};
\draw (d6) -- (e17) {};
\draw (d6) -- (e18) {};
\draw (d6) -- (e19) {};

\draw (d7) -- (e1) {};
\draw (d7) -- (e2) {};
\draw (d7) -- (e3) {};
\draw (d7) -- (e4) {};
\draw (d7) -- (e5) {};
\draw (d7) -- (e6) {};
\draw (d7) -- (e7) {};
\draw (d7) -- (e8) {};
\draw (d7) -- (e9) {};
\draw (d7) -- (e10) {};
\draw (d7) -- (e11) {};
\draw (d7) -- (e12) {};
\draw (d7) -- (e13) {};
\draw (d7) -- (e14) {};
\draw (d7) -- (e15) {};
\draw (d7) -- (e16) {};
\draw (d7) -- (e17) {};
\draw (d7) -- (e18) {};
\draw (d7) -- (e19) {};

\draw (d8) -- (e1) {};
\draw (d8) -- (e2) {};
\draw (d8) -- (e3) {};
\draw (d8) -- (e4) {};
\draw (d8) -- (e5) {};
\draw (d8) -- (e6) {};
\draw (d8) -- (e7) {};
\draw (d8) -- (e8) {};
\draw (d8) -- (e9) {};
\draw (d8) -- (e10) {};
\draw (d8) -- (e11) {};
\draw (d8) -- (e12) {};
\draw (d8) -- (e13) {};
\draw (d8) -- (e14) {};
\draw (d8) -- (e15) {};
\draw (d8) -- (e16) {};
\draw (d8) -- (e17) {};
\draw (d8) -- (e18) {};
\draw (d8) -- (e19) {};

\draw (d9) -- (e1) {};
\draw (d9) -- (e2) {};
\draw (d9) -- (e3) {};
\draw (d9) -- (e4) {};
\draw (d9) -- (e5) {};
\draw (d9) -- (e6) {};
\draw (d9) -- (e7) {};
\draw (d9) -- (e8) {};
\draw (d9) -- (e9) {};
\draw (d9) -- (e10) {};
\draw (d9) -- (e11) {};
\draw (d9) -- (e12) {};
\draw (d9) -- (e13) {};
\draw (d9) -- (e14) {};
\draw (d9) -- (e15) {};
\draw (d9) -- (e16) {};
\draw (d9) -- (e17) {};
\draw (d9) -- (e18) {};
\draw (d9) -- (e19) {};

\draw (d10) -- (e1) {};
\draw (d10) -- (e2) {};
\draw (d10) -- (e3) {};
\draw (d10) -- (e4) {};
\draw (d10) -- (e5) {};
\draw (d10) -- (e6) {};
\draw (d10) -- (e7) {};
\draw (d10) -- (e8) {};
\draw (d10) -- (e9) {};
\draw (d10) -- (e10) {};
\draw (d10) -- (e11) {};
\draw (d10) -- (e12) {};
\draw (d10) -- (e13) {};
\draw (d10) -- (e14) {};
\draw (d10) -- (e15) {};
\draw (d10) -- (e16) {};
\draw (d10) -- (e17) {};
\draw (d10) -- (e18) {};
\draw (d10) -- (e19) {};

\draw (d11) -- (e1) {};
\draw (d11) -- (e2) {};
\draw (d11) -- (e3) {};
\draw (d11) -- (e4) {};
\draw (d11) -- (e5) {};
\draw (d11) -- (e6) {};
\draw (d11) -- (e7) {};
\draw (d11) -- (e8) {};
\draw (d11) -- (e9) {};
\draw (d11) -- (e10) {};
\draw (d11) -- (e11) {};
\draw (d11) -- (e12) {};
\draw (d11) -- (e13) {};
\draw (d11) -- (e14) {};
\draw (d11) -- (e15) {};
\draw (d11) -- (e16) {};
\draw (d11) -- (e17) {};
\draw (d11) -- (e18) {};
\draw (d11) -- (e19) {};

\draw (d12) -- (e1) {};
\draw (d12) -- (e2) {};
\draw (d12) -- (e3) {};
\draw (d12) -- (e4) {};
\draw (d12) -- (e5) {};
\draw (d12) -- (e6) {};
\draw (d12) -- (e7) {};
\draw (d12) -- (e8) {};
\draw (d12) -- (e9) {};
\draw (d12) -- (e10) {};
\draw (d12) -- (e11) {};
\draw (d12) -- (e12) {};
\draw (d12) -- (e13) {};
\draw (d12) -- (e14) {};
\draw (d12) -- (e15) {};
\draw (d12) -- (e16) {};
\draw (d12) -- (e17) {};
\draw (d12) -- (e18) {};
\draw (d12) -- (e19) {};

\draw (d13) -- (e1) {};
\draw (d13) -- (e2) {};
\draw (d13) -- (e3) {};
\draw (d13) -- (e4) {};
\draw (d13) -- (e5) {};
\draw (d13) -- (e6) {};
\draw (d13) -- (e7) {};
\draw (d13) -- (e8) {};
\draw (d13) -- (e9) {};
\draw (d13) -- (e10) {};
\draw (d13) -- (e11) {};
\draw (d13) -- (e12) {};
\draw (d13) -- (e13) {};
\draw (d13) -- (e14) {};
\draw (d13) -- (e15) {};
\draw (d13) -- (e16) {};
\draw (d13) -- (e17) {};
\draw (d13) -- (e18) {};
\draw (d13) -- (e19) {};

\end{tikzpicture}

\caption{First spheres of an anti-tree.}
\end{minipage}
\end{figure}
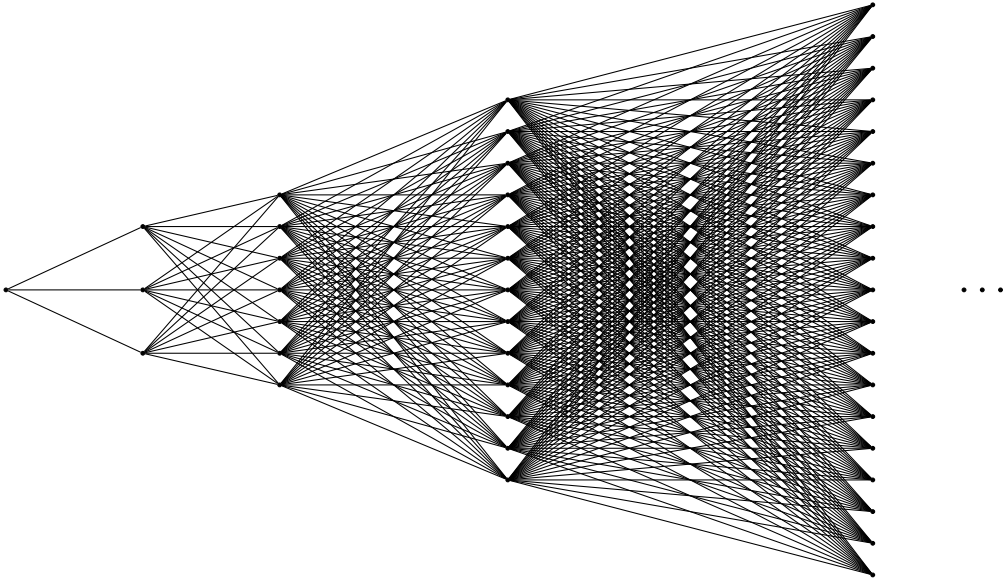

Our second aim concerns the application of our results to anti-trees. These graphs play a prominent role in the theory of graphs with unbounded geometry as they provide examples for similarities and disparities between graphs and manifolds.
Specifically, considering the combinatorial graph distance, they includeexamples of polynomial growth which are stochastically incomplete \cite{Wojciechowski11}  and have a spectral gap \cite{KellerLW-13}. This disparity was later resolved by the use of intrinsic metrics \cite{Folz-14,HaeselerKW-13, Huang, HuangKS-20, KellerLW-21}.

To illustrate the anti-trees we are considering here, let $\gamma\geq 0$, $X$ an anti-tree with root $o$ an which has $s_{k-1}=[ k^\gamma]$ vertices in the sphere with combinatorial distance $ k \ge1$ to $ o $. In contrast to trees where each vertex has only one backwards neighbor, anti-trees have complete bipartite graphs between spheres. Furthermore, let $\rho$ the intrinsic path degree metric, cf.~Section~\ref{section:antitrees}. Then if $\gamma=0$, the anti-tree is isomorphic to $\NN$, for $\gamma\in(0,2)$ it has polynomial volume growth, for $\gamma=2$ the volume growth is exponential, and for $\gamma>2$ the intrinsic diameter is bounded (with respect to the intrinsic metric $ \rho $).

So far, there were no results on heat kernel bounds of the above form for anti-trees as their vertex degree is not bounded.  For $\gamma\in(0,2)$, we denote $d=\frac{2(\gamma+1)}{2-\gamma}  $ which appears as the volume growth dimension. Then there is
$C_0>0$  such that  for all $x,y\in X$ with either $ \rho(o,x)\neq \rho(o,y) $ or $ x=y $ and  $t\geq 2\cdot 72^2$ we have
	
\[		p_{t}(x,y)
		\leq 
	 C 
	 \left(1+\frac{\rho(o,x)^2+\rho(o,y)^2}{t}\right)^{d}
		\cdot \frac{ \left(1\vee\left(\sqrt{t^2+\rho(x,y)^2}-t\right)\right)^{d}}{m(B_{o}(\sqrt {t}))}
		\euler^{-\zeta_1\left(\rho(x,y),t\right)}.
		\]
Replacing the intrinsic metric $ \rho $ with the combinatorial graph metric, we denote the combinatorial distance of $ x $ to the root $ o $ by $ |x| $. Then, we can further estimate this heat kernel bound to obtain the following for large $t>0$
\begin{align*}
		p_{t}(x,y)
	\leq 
	C 
	\left(1+\frac{ |x|^{2({\gamma+1})}+|y|^{2(\gamma+1)}}{t^{d}}\right)
	\cdot \frac{ 1}{t^{\frac{d}{2}}}
		\euler^{-\frac{ ||x|-|y||^{2-\gamma}}{Ct}}.
\end{align*}

In the proof we show that such anti-trees satisfiy certain  isoperimetric estimates in all distance balls which then in turn implies the required Sobolev inequalities. Albeit the latter implication is classical for manifolds, we could not find a reference  on general graphs with respect to intrinsic metrics. Therefore, we generalize the proof of the Cheeger inequality to our setting from \cite{KellerLW-21}.
\\
Similar Gaussian bounds for the heat kernel on anti-trees could be derived from the main results in \cite{KellerRose-22a} in conjunction with our developments on isoperimetric constants. However, this bound would not include the first polynomial correction term displayed above and would only hold for larger times, cf.~Theorem~\ref{thm:antitree2}.
\\

The outline of this paper is as follows. In Section~\ref{section:mainresults}, we introduce weighted graphs, their Laplacian, intrinsic metric, and the heat kernel. Moreover, we recall the Sobolev and volume doubling properties we are working with. 
In Section~\ref{section:anchoredgaussian} we formulate and prove our main result. The implication that isoperimetric estimates yield Sobolev inequalities is proven in Section~\ref{section:iso}.
Section~\ref{section:antitrees} is devoted to the study of anti-trees. We show that their heat kernels are spherically symmetric and that their study can therefore  be reduced to the study of the heat kernel on a particular model graph on $\NN_0$. It will be shown that these models  satisfy isoperimetry and Ahlfors regularity, i.e., polynomial volume growth, if the sphere function is chosen as above.

\section{The set-up}\label{section:mainresults}

Let $X$ be a countable set, $m\colon X\to(0,\infty)$ a measure on $X$, and $b$ a locally finite connected graph over $(X,m)$.  We denote by $\cC(X)$ the set of continuous functions on $X$, and $\nabla_{xy}f:=f(x)-f(y)$, $f\in\cC(X)$. Let $\Delta\colon \cC(X)\to\cC(X)$ be the operator
\[
\Delta f(x)=\frac1{m(x)}\sum_{y\in X} b(x,y)\nabla_{xy}f, \quad f\in \cC(X),
\] 
where we interpret the right-hand side as divergence form operator. Note that $\Delta$ is non-negative and symmetric in $\ell^2(X,m)$. By abusing notation, we also denote by $\Delta\geq 0$ iits Friedrichs extension. We let
\[
\Lambda:=\inf\spec(\Delta)
\]
be the infimum of the spectrum of $\Delta$. The heat kernel $p$ is the minimal integral kernel of the heat semigroup $(P_t)_{t\geq 0}$, i.e., $P_t=\euler^{-t\Delta}$, $t\geq 0$, and for all $f\in \ell^2(X,m)$ and $t\geq 0$ we have
\[
P_t f(x)=\sum_{x\in X} m(y)p_t(x,y)f(y).
\]
Moreover, for any $f\in \ell^2(X,m)$, the map $t\mapsto P_tf$ solves the heat equation
\[
\frac{d}{dt} P_tf=-\Delta P_tf, \quad t\geq 0.
\]

A pseudo-metric $\rho\colon X\times X\to[0,\infty)$ is called intrinsic with respect to $b$ on $(X,m)$ if 
\[
\sum_{y\in X} b(x,y)\rho^2(x,y)\leq m(x), \quad x\in X.
\]
In the following, $\rho$ will always be a non-trivial intrinsic (pseudo-) metric with finite jump size
\[
S:=\sup\{\rho(x,y)\colon x,y\in X, b(x,y)>0\}>0.
\]

For $r\geq 0$ and $o\in X$, we denote 
$$B_o(r):=\{x\in X\colon \rho(x,o)\leq r\}.$$
We abbreviate $B(r)=B_o(r)$ if there is no confusion about the role of $o$.
A standing assumption on the intrinsic metric is the following.
\begin{assumption} The distance balls with respect to the intrinsic metric are compact and the jump size is finite.
\end{assumption}
{For $ x\in X $ and $ f\in\cC(X) $ we define 
	\[
	\vert \nabla f\vert(x):=\Bigg(\frac{1}{m(x)}\sum_{y\in X}b(x,y) (\nabla_{xy}f)^2\Bigg)^\frac{1}{2}.
	\]
	The combinatorial interior of a set $ A\subset X $
	will be denoted by 
	\begin{align*}
		A^{\circ}=\{x\in A\colon b(x,y)=0 \mbox{ for all }y\in X\setminus A \}.
	\end{align*}
}

The results presented in here are obtained by assuming the classical Sobolev and volume doubling assumptions. We encode them for weighted graph Laplacians in terms of intrinsic metrics. The big difference to the assumptions on manifolds is that we require these properties only on annuli with positive inradius instead of all small balls. 
\begin{definition}\label{def:sobvol} Let $b$ be a graph over $(X,m)$, $o\in X$, $R_2\geq R_1\geq 0$, $n>2$, and $d>0$.
\begin{enumerate}[(i)]
\item The Sobolev inequality $S(n,R_1,R_2)$ holds (in $o$), if there exists a constant $C_S>0$ such that for all $R\in[R_1,R_2]$, $u\in\cC(X)$, $\supp u\subset B(R)^{\circ}$, we have
\begin{equation*}
\frac{m(B(R))^{\frac{2}{n}}}{C_SR^2}
\Vert u\Vert_{\frac{2n}{n-2}}^2
\leq \Vert \vert\nabla u\vert\Vert_2^2+\frac{1}{R^2}\Vert u\Vert_2^2.
\end{equation*}
We abbreviate $S(n,R_1):=S(n,R_1,R_1)$.
\item The volume doubling property $V(d,R_1,R_2)=V(d,R_1,R_2)$ is satisfied (in $o$) if there exists $C_D>0$ such that 
\[
m(B(r_2))\leq C_D \left(\frac{r_2}{r_1}\right)^d m(B(r_1)),\quad R_1\leq r_1\leq r_2\leq R_2.
\]

\item 
The property $SV(R_1,R_2)$ respectively $SV(R_1,R_2,d,n)$  is satisfied (in $o$) if Sobolev $S(n,R_1,R_2)$ and volume doubling $V(d,R_1,R_2)$ hold.
\end{enumerate}
\end{definition}

\begin{remark}
We can replace $V(d,R_1,R_2)$ by the equivalent property $V^\ast(d,R_1,R_2)$
\[
m(B(2r))\leq C_D^\ast m(B(r)), \qquad r\in[R_1,R_2].
\]
$V(d,R_1,R_2)$ implies $V^\ast(d,R_1,R_2)$ and  $V^\ast(d,R_1,R_2)$ implies  $V(d,R_1,R_2/2)$, however,  with different constants.
Due to this asymmetry in the radii the volume doubling assumption $V(d,R_1,R_2)$ is more natural.
\end{remark}




%
%
%
%
%
%
%
%

%
%
%
%
%
%
%
%
%

%
%
%
%
%
%
%
%
%
%
%
%

%
%
%
%
%
%
%
%
%

\section{Anchored Gaussian upper bounds}\label{section:anchoredgaussian}

In order to formulate our main result, we recall the following weighted means of the degree and the inverse measure introduced in \cite{KellerRose-22a}. 
The weighted vertex degree is given, for $ x\in X $, by
\[
\Deg(x):=\frac{\deg(x)}{m(x)}=\frac{1}{m(x)}\sum_{y\in X} b(x,y).
\]
For $R\geq 0$, and $p\in(1,\infty)$, we define 
\begin{align*}
D_p(R)&:=D_p(o,R):=\left(\frac{1}{m(B(R))}\sum_{y\in B(R)}m(y)\Deg(y)^p\right)^{\frac{1}{p}},\quad
\\
 M_p(R)&:=M_p(o,R):=\left(\frac{1}{m(B(R))}\sum_{y\in B(R)}m(y)\frac{1}{m(y)^p}\right)^{\frac{1}{p}},
\end{align*}
and 
\[
D_\infty(R):=D_\infty(o,R):=\sup_{B(R)} \ \Deg \quad \text{and}\quad M_\infty(R):=M_\infty(o,R):=\sup_{B(R)}\ \frac{1}{m}.
\]

We set for $  \beta\in(1,1+1/q) $
\[
\theta(r):=\frac{1}{2\beta^{\kappa(r)}}, \qquad  \kappa(r):=\left\lfloor\sqrt{\frac{r}{4S}}-2\right\rfloor.
\]
Define the error-function $\Gamma(r)
:=
\Gamma_o(r,p,n,\beta)\geq 0$ by
\[
\Gamma(r)=\left[\left(1+r^2D_p\left(r\right)\right)
M_p\left(r\right)^q m\left(B\left(r\right)\right)^{q}\right]^{\theta(r)}.
\]

Our main theorem reads as follows.

\begin{theorem}\label{thm:main1}Let $d>0$, $n>2$, $p\in(1,\infty]$, $ \alpha=1+2/n $, $  \beta=1+2/(n\vee 2q)  $, and
\[R_2\geq  4 R_1\geq 32S\left(\frac{\ln q}{\ln\frac{\alpha}{\beta}}+3\right)^2.\]
Assume $SV(R_1,R_2)$. 
Then we have for all $x,y\in B(R_2/4)$ and $t\geq 2R_1^2$
\begin{align*}
p_t(x,y)
&
\leq 
C\ \Gamma(r(t,x))\Gamma(r(t,y))\left(1+\frac{\rho(o,x)^2+\rho(o,y)^2}{t\wedge R_2^2}\right)^{\frac{n}{2}}
\\
&\quad\cdot \frac{\left(1\vee (S^{-2} \sqrt{t^2+\rho(x,y)^2S^2}-t)\right)^{\frac{n}{2}}}{m(B(\sqrt t\wedge R_2))}\euler^{-\Lambda \left(t-t\wedge R_2^2\right)-\zeta_S(\rho(x,y),t)},
\end{align*}
where  $r(t,x):= \rho(o,x)+(\sqrt {t/2}\wedge R_2/4)$ and $C=2^{3n+2d+2}\euler C_{d,n,\beta}C_D$.
\end{theorem}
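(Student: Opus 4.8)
\textbf{Proof strategy for Theorem~\ref{thm:main1}.}

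The plan is to bootstrap from the non-anchored Gaussian upper bound of \cite{KellerRose-22a} (the simplified version recalled in the introduction), applied not at $x,y$ themselves but at the root $o$, and then to transfer the estimate to arbitrary $x,y\in B(R_2/4)$ by exploiting the semigroup property together with a careful comparison of the volume terms $m(B_x(\sqrt t))$ and $m(B_o(\sqrt t))$. The first step is to observe that the hypothesis $SV(R_1,R_2)$ about $o$, via volume doubling $V(d,R_1,R_2)$, controls how much $m(B_x(r))$ can differ from $m(B_o(r))$ once $x\in B(R_2/4)$: indeed $B_x(r)\subset B_o(r+\rho(o,x))$ and $B_o(r)\subset B_x(r+\rho(o,x))$, so $V(d,\cdot,\cdot)$ yields $m(B_x(r))\asymp (1+\rho(o,x)/r)^{\pm d} m(B_o(r))$ on the relevant range of radii, and for $r=\sqrt t$ with $t\geq 2R_1^2$ this is exactly the source of the polynomial correction factor $(1+(\rho(o,x)^2+\rho(o,y)^2)/t)^{n/2}$ in the statement (after replacing $d$ by $n/2$, absorbing the mismatch into the constant $C$, which is why $C$ carries a factor $C_D$ and a power of $2$ growing in both $d$ and $n$).

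The second, more substantial step is to establish that the Sobolev inequality $S(n,R_1,R_2)$ in $o$ forces a Sobolev inequality (with a controlled constant) in every $x\in B(R_2/4)$, on a suitable — necessarily smaller — range of radii, so that the machinery of \cite{KellerRose-22a} can be invoked at $x$ (and at $y$). Here the key point is a localization/covering argument: a ball $B_x(R)$ with $x\in B(R_2/4)$ and $R$ not too large is contained in $B_o(R+R_2/4)\subseteq B_o(R_2)$, and one pushes the Sobolev inequality in $o$ down to $B_x(R)$ at the cost of replacing $m(B_o(R))$ by $m(B_x(R))$ (again using the volume comparison above) and adjusting $R$. This is where the somewhat awkward lower bound $R_2\geq 4R_1\geq 32S(\ln q/\ln(\alpha/\beta)+3)^2$ and the precise choices $\alpha=1+2/n$, $\beta=1+2/(n\vee 2q)$ enter: they guarantee that $\kappa(r)\geq 0$ and that the geometric iteration defining $\theta$ and the error function $\Gamma$ in \cite{KellerRose-22a} still converges on the range of radii available at $x$. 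The error functions $D_p, M_p$ evaluated at the shifted radius $r(t,x)=\rho(o,x)+(\sqrt{t/2}\wedge R_2/4)$ appear precisely because the Moser-type iteration at $x$ sees the geometry of a ball of that radius around $o$.

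The third step assembles the pieces via the semigroup identity $p_t(x,y)=\sum_{z} m(z)\,p_{t/2}(x,z)\,p_{t/2}(z,y)$ together with on-diagonal bounds $p_{t/2}(x,x)\le \cdots$ and $p_{t/2}(y,y)\le\cdots$ obtained from the localized Sobolev inequalities at $x$ and $y$, and then upgrades the on-diagonal bound to the full off-diagonal Gaussian bound by the standard Davies–Grigor'yan perturbation (integrated maximum principle) argument, which is insensitive to the anchoring and produces the exponential factor $\euler^{-\zeta_S(\rho(x,y),t)}$ and the polynomial pre-factor $(1\vee(S^{-2}\sqrt{t^2+\rho(x,y)^2S^2}-t))^{n/2}$; the spectral term $\euler^{-\Lambda(t-t\wedge R_2^2)}$ comes, exactly as in \cite{KellerRose-22a}, from splitting $t$ into a part $\le R_2^2$ handled by the local Sobolev/doubling data and a remaining part handled by the crude bound $\|P_s\|_{2\to 2}\le \euler^{-\Lambda s}$. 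I expect the main obstacle to be the bookkeeping in the second step: tracking how the Sobolev constant $C_S$, the doubling constant $C_D$, and the range $[R_1,R_2]$ degrade under the passage from $o$ to $x$, and verifying that the resulting radii still satisfy the hypotheses needed to quote the main theorem of \cite{KellerRose-22a} at $x$ and $y$ — in particular that the shift by $\rho(o,x)\le R_2/4$ does not push anything outside $[R_1,R_2]$, which is precisely what the factor $4$ in $R_2\geq 4R_1$ buys.
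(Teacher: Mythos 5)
Your proposal takes a genuinely different route from the paper, and along the way it misattributes where the main correction factor comes from.

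The paper does not transfer the Sobolev inequality from $o$ to $x$, nor does it split $p_t$ via the Chapman--Kolmogorov identity and run a separate on-diagonal/off-diagonal argument. Instead, the whole point of the proof is that the two ingredients from \cite{KellerRose-22a} can be recombined so that everything stays anchored at $o$: the $\ell^2$-mean-value inequality of Proposition~\ref{prop:l2meanvalue} is applied on the $o$-centered ball $B_o(R)$ with $R=2r(t,x)=2\rho(o,x)+2(\sqrt{t/2}\wedge R_2/4)$, and since $x\in B_o(R/2)$, the supremum over the half-ball already dominates $(P_{t/2}^\omega f)^2(x)$. That pointwise bound at $x$ (and the analogous one at $y$) is exactly the hypothesis of the abstract Davies-type result, Proposition~\ref{theorem:daviesabstractgraph}, which then directly produces $p_t(x,y)$ with the Gaussian $\euler^{-\zeta_S}$ and the spectral term. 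No Sobolev inequality centered at $x$ is ever needed, so the ``second, more substantial step'' of your outline is absent from the paper, and with it all the bookkeeping about how $C_S$, $C_D$, and the admissible radii degrade when re-centering.

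Concretely, two of your attributions are off. First, the polynomial correction $(1+(\rho(o,x)^2+\rho(o,y)^2)/(t\wedge R_2^2))^{n/2}$ does \emph{not} come from comparing $m(B_x(\sqrt t))$ with $m(B_o(\sqrt t))$: in the paper it arises from the free parameter $\tau$ in the Davies iteration, chosen as
\[
\tau=1\wedge\frac{t\wedge R_2^2}{(2r(t,x))^2+(2r(t,y))^2}\wedge\frac{1}{((2r(t,x))^2+(2r(t,y))^2)\sigma(\rho,t)},
\]
and the factor is precisely $\tau^{-n/2}$. Second, the only volume comparison used is between $o$-centered balls of different radii, $m(B_o(r(t,x)))$ versus $m(B_o(\sqrt t\wedge R_2))$, via $V(d,R_1,R_2)$; this is where the $4^d C_D$ in the constant comes from, not a passage between $B_x$ and $B_o$. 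Your route — re-centering Sobolev at $x$ and re-running the Moser/Davies machinery there — might be made to work, but it is substantially heavier, and your first step (``apply the non-anchored bound at $o$ and transfer to $x,y$ by volume comparison'') does not by itself yield $p_t(x,y)$ for $x,y\neq o$ and would have to be replaced by the on-diagonal-then-perturb argument of your third step. The paper's combination is cleaner precisely because it avoids all of this.
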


As initially observed by Davies, \cite{Davies-93}, instead of the Gaussian $\euler^{-r^2/4t}$ known from manifolds, for graphs the function $\euler^{-\zeta_S(r,t)}$ with 
  \[
\zeta_S(r,t):=\frac{1}{S^2}\left(r S \arsinh\left(\frac{r S}{t}\right)+t-\sqrt{t^2+r^2S^2}\right),
\]
for $ r\geq 0$, $t>0$, appears, where $ S $ is the jump size of the intrinsic metric.

In order to prove Theorem~\ref{thm:main1}, we use two results from \cite{KellerRose-22a} based on Davies' method. Here we combine these two propostions in a different way than in \cite{KellerRose-22a} to obtain anchored estimates. To obtain estimates  of solutions of the heat equation, we investigate properties of solutions of the $\omega$-heat equation
\[
\frac{d}{dt} v_t= -\Delta_\omega v_t,
\]
where $\Delta_\omega:= \euler^\omega\Delta \euler^{-\omega}$ is a sandwiched Laplacian for $\omega\in\ell^\infty(X)$. 
The following proposition is the first ingredient of the proof of the main theorem. It provides an $\ell^2$-mean value inequality for non-negative solutions of the $\omega$-heat equation. The displacement of the solutions with respect to the heat equation is measured in terms of the function
\[
h(\omega)=\sup_{x\in X}\frac{1}{m(x)}\sum_{y\in X} b(x,y)\vert \nabla_{xy}\euler^\omega\nabla_{xy}\euler^{-\omega}\vert.
\]
 
\begin{proposition}[{\cite[Theorem~4.2]{KellerRose-22a}}]\label{prop:l2meanvalue} Let $ x\in X $, $d>0$, $n>2$,  $p\in(1,\infty]
$, $\alpha=1+\tfrac2n$,
\[
\beta=1+\frac{1}{n\vee 2q}, 
\quad\mbox{
and
}\quad
R\geq 
8S\left(\frac{\ln q}{\ln\frac{\alpha}{\beta}}+3\right)^2.
\] 
There is a constant $C_{d,n,\beta}>0$ such that if $SV(R/2,R)$ holds in $o$,  then for $\tau\in(0,1]$, $T\in\RR$,  
 $\omega\in\ell^\infty(X)$, and all non-negative solutions $v\geq 0$ of the $\omega$-heat equation on $[T-R^2,T+R^2]\times B(R)$ we have 
\begin{align*}
\sup_{[T-\tau R^2/8,T+\tau R^2/8]\times B(R/2)}v^2 
\leq 
\frac{C_{d,n,\beta} \Gamma( R/2)^2}
{\tau^{\frac{n}{2}+1}R^{2}m(B(R))}(1+\tau R^2h(\omega))^{\frac{n}{2}+1}
\int\limits_{T-\tau R^2}^{T+ \tau R^2}\sum_{B(R)} m\ v_t^{2}\ \drm t.
\end{align*}
\end{proposition}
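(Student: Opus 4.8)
\emph{Proof proposal.}
The statement is the parabolic $\ell^2$-mean value inequality underlying Davies' method, so the plan is a De~Giorgi--Nash--Moser iteration performed \emph{directly} on the non-negative solution $v$ of the $\omega$-heat equation. One must resist passing to the ordinary solution $u=\euler^{-\omega}v$ of $\partial_t u=-\Delta u$: a mean value inequality for $u$ would only give $\sup v^2\le\euler^{2(\sup\omega-\inf\omega)}\iint v^2$, which is useless once $\omega$ is optimised to produce the Gaussian. By time-translation I may assume $T=0$; all cut-offs will be supported inside the cylinder $[-R^2,R^2]\times B(R)$ on which $v$ solves the equation, and since the Sobolev inequality requires supports in the combinatorial interior, each spatial cut-off is chosen to vanish on a full combinatorial neighbourhood of the complement --- this costs a collar of width at least $S$, which is ultimately why the iteration can only run finitely many steps.

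\emph{Step 1 (discrete Caccioppoli estimate).} For $\sigma\ge1$ and a space-time cut-off $\phi$, I would test $\partial_t v=-\Delta_\omega v$ against $\phi^2v^{2\sigma-1}m$ and sum over $X$. Summation by parts, the exact discrete Leibniz rule $\nabla_{xy}(fg)=\tfrac{f(x)+f(y)}{2}\nabla_{xy}g+\tfrac{g(x)+g(y)}{2}\nabla_{xy}f$ (used to unfold $\Delta_\omega=\euler^{\omega}\Delta\euler^{-\omega}$), and the elementary inequalities comparing $(a-b)(a^{2\sigma-1}-b^{2\sigma-1})$ with $(a^{\sigma}-b^{\sigma})^2$ (which substitute for the missing discrete chain rule) extract the good energy $\sum_X m\phi^2|\nabla v^{\sigma}|^2$, a time-derivative term, and error terms. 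The errors of the shape $\nabla_{xy}\euler^{\omega}\,\nabla_{xy}\euler^{-\omega}$ are exactly those bounded by $h(\omega)$; the cut-off errors $|\nabla\phi|(x)^2$ are bounded by the square of the $\rho$-Lipschitz constant of $\phi$, thanks to the intrinsic condition $\sum_y b(x,y)\rho^2(x,y)\le m(x)$. Every term in which a bare $\sum_y b(x,y)=m(x)\Deg(x)$ or a bare $m(x)^{-1}$ survives is controlled by Hölder's inequality with exponents $p$ and $q$; this peels off a factor built from $D_p(r)$ (respectively $M_p(r)$) and $m(B(r))$ at the price of inflating the power of $v$ on the right-hand side by a factor $q$. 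Integrating in $t$ over the relevant slab yields, for nested cylinders with spatial collar $w\ge S$ and temporal collar $\delta$, a Caccioppoli inequality whose constant is $\le C\sigma\,(w^{-2}+\delta^{-1}+h(\omega))$ times those $D_p$-, $M_p$- and volume factors.

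\emph{Step 2 (Sobolev, interpolation, one reverse-Hölder step).} Next I would insert $\phi v^{\sigma}$, supported in $B(r)^{\circ}$ with $r\in[R/2,R]$, into the Sobolev inequality $S(n,R/2,R)$, estimate $|\nabla(\phi v^\sigma)|\le\phi|\nabla v^\sigma|+v^\sigma|\nabla\phi|$, integrate in $t$, and combine Step~1 with the parabolic interpolation $\iint m\,f^{2\alpha}\le(\sup_t\!\int m\,f^2)^{2/n}\,\int(\int m\,f^{2n/(n-2)})^{(n-2)/n}\drm t$ with $\alpha=1+2/n$. Volume doubling $V(d,R/2,R)$ then replaces $m(B(r))$ by $m(B(R))$ and $r$ by $R$ up to a fixed power of $C_D$. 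The outcome is one reverse-Hölder step: integrability improves by the factor $\alpha$, while the right-hand side still carries the $q$-inflated exponent of $v$ and the accumulated constant $\le C\sigma(w^{-2}+\delta^{-1}+h(\omega))\cdot C_S R^2 m(B(R))^{-2/n}$.

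\emph{Step 3 (iteration, assembly, and the main obstacle).} Finally I would iterate Steps~1--2 over a finite chain of nested cylinders shrinking towards $[-\tau R^2/8,\tau R^2/8]\times B(R/2)$, with collars proportional to $\tau$ and geometrically comparable, and with the exponent sequence growing at rate $\beta<\alpha$ so that the surplus $\alpha/\beta$ gained per step is exactly what is spent undoing the $q$-inflation from Step~1. Since each collar must stay above $S$, only of the order $\kappa(R/2)=\lfloor\sqrt{R/(8S)}-2\rfloor$ steps are available, and for the total surplus to dominate the $q$-th powers carried by the error factors --- i.e.\ for $(\alpha/\beta)^{\kappa(R/2)}\ge q$ --- one needs precisely $R\ge8S(\tfrac{\ln q}{\ln(\alpha/\beta)}+3)^2$, which is the hypothesis. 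Collecting everything after the (cheap, since $p_N=2\beta^{\kappa(R/2)}$ is large) final $\ell^{p_N}$-to-$\ell^\infty$ passage: the constants $w_k^{-2},\delta_k^{-1},h(\omega)$ telescope into $\tfrac{C_{d,n,\beta}}{\tau^{n/2+1}R^2}(1+\tau R^2h(\omega))^{n/2+1}$ (the exponent $n/2+1=\sum_{k\ge0}\alpha^{-k}$), the volume factors into $m(B(R))^{-1}$, and the $D_p$-, $M_p$- and $m(B)$-factors, after the final root extraction, into exactly $\Gamma(R/2)^2$ --- the exponent $\theta(R/2)=\tfrac1{2\beta^{\kappa(R/2)}}$ being essentially the reciprocal of the final integrability exponent. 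I expect the genuine difficulty to lie precisely in this last bookkeeping: tracking through all $\sim\kappa(R/2)$ reverse-Hölder steps how the $D_p(r_k)$-, $M_p(r_k)$- and $m(B(r_k))$-factors and the $q$-inflations compound, and checking that after the final root they assemble into $\Gamma(R/2)^2$ and no worse --- this is what simultaneously fixes the admissible number of steps and the lower bound on $R$. The discrete Caccioppoli computation of Step~1 (non-integer powers $v^{2\sigma-1}$ and the $\euler^{\pm\omega}$ commutators producing precisely $h(\omega)$) is technical but follows the standard pattern for graph Moser iteration.
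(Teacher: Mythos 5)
This proposition is imported verbatim from \cite[Theorem~4.2]{KellerRose-22a}; the present paper gives no proof of it, so there is no in-paper argument to compare against. Your reconstruction nonetheless captures the architecture one expects for a graph Moser iteration in the unbounded-geometry setting: a discrete Caccioppoli estimate obtained by testing directly in $v$ (crucially \emph{not} in $u=\euler^{-\omega}v$, for the reason you give), intrinsic-metric control of the cutoff gradients, H\"older against $D_p$ and $M_p$ with the attendant $q$-inflation, one reverse-H\"older step at rate $\alpha=1+2/n$ via Sobolev and parabolic interpolation, and an iteration run at the slower rate $\beta<\alpha$ that is forced to terminate after roughly $\kappa(R/2)$ steps because every spatial collar must exceed the jump size $S$. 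You correctly back out the threshold on $R$ from the requirement $(\alpha/\beta)^{\kappa(R/2)}\geq q$, the exponent $n/2+1=\alpha/(\alpha-1)$, and $\theta(R/2)=1/(2\beta^{\kappa(R/2)})$ as the reciprocal of the terminal integrability exponent; these are precisely the fingerprints the statement of the proposition leaves behind. The one thing that cannot be certified from your sketch is the bookkeeping that makes the compounded $D_p$-, $M_p$- and volume factors across all $\sim\kappa(R/2)$ steps resolve to exactly $\Gamma(R/2)^2$ and no worse; you rightly flag this as the substantive difficulty, and without the source proof that step remains unverified here, but the strategic outline is sound and matches the design of the quantities $\Gamma$, $\theta$, $\kappa$ introduced in the paper.
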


Proposition~\ref{prop:l2meanvalue} suffices to derive off-diagonal heat kernel bounds as shown by the following proposition, our second ingredient, in points where $\ell^2$-mean value inequalities are available. To this end, observe that, denoting $P_t^\omega:=\euler^\omega P_t\euler^{-\omega}$, the semigroup $(P_t^\omega)_{t\geq 0}$ acts on $\ell^2(X,m)$. Moreover, the map $t\mapsto P_t^\omega f$ solves the $\omega$-heat equation for $f\in\ell^2(X,m)$ and $\omega\in\ell^\infty(X)$.
\begin{proposition}[{\cite[Theorem~5.3]{KellerRose-22a}}]\label{theorem:daviesabstractgraph}Let $T>0$,
$Y\subset X$, and
 $a, b\colon Y \to [0,\infty)$, $a\leq b$, and $\phi\colon Y\times [0,\infty)\to [0,\infty)$ such that 
for all $f\in \ell^2(X), f\geq 0$, $\omega\in\ell^\infty(X)$, and $x\in Y$ we have
\[
\phi(x, h(\omega))^{2}(P_T^\omega f)^2(x)\leq  \int_{a(x)}^{b(x)}\Vert P_t^\omega f\Vert_2^2\ \drm t.
\]
Then we have 
for all $x,y\in Y$
\begin{align*}
p_{2T}(x,y)
&\leq 
\frac
{
(b(x)-a(x))^{\frac{1}{2}}(b(y)-a(y))^{\frac{1}{2}}
\exp\left(\frac{b(x)+b(y)-2T}{2}\sigma(\rho(x,y),2T)\right)
}
{\phi\big(x,\sigma(\rho(x,y),2T)\big)\phi\big(y,\sigma(\rho(x,y),2T)\big)}
\\
&\quad \cdot\exp\big(-\Lambda(a(x)+a(y))-\zeta_S(\rho(x,y),2T)\big),
\end{align*}
where
\[
\sigma(r,t):=2 S^{-2}\left(\sqrt{1+\frac{r^2S^2}{t^2}}-1\right).
\]
\end{proposition}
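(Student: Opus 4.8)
The plan is to run Davies' perturbation method, with the mean-value hypothesis playing the role that ultracontractivity plays in the classical argument. Fix $x,y\in Y$, write $r:=\rho(x,y)$, and let $\delta_z$ be the indicator of $\{z\}$. The first ingredient is the $\ell^2$-decay of the sandwiched semigroup: for $\omega\in\ell^\infty(X)$ and $g$ in the form domain of $\Delta$, expanding $\langle\Delta_\omega g,g\rangle=\langle\Delta(\euler^{-\omega}g),\euler^\omega g\rangle$ and using $\nabla_{uv}\euler^\omega\,\nabla_{uv}\euler^{-\omega}=-2(\cosh(\nabla_{uv}\omega)-1)\le0$ together with $2|g(u)g(v)|\le g(u)^2+g(v)^2$ gives
\[
\langle\Delta_\omega g,g\rangle=\tfrac12\sum_{u,v}b(u,v)(\nabla_{uv}g)^2+\tfrac12\sum_{u,v}b(u,v)\,g(u)g(v)\,\nabla_{uv}\euler^\omega\,\nabla_{uv}\euler^{-\omega}\ \ge\ \Big(\Lambda-\tfrac12h(\omega)\Big)\Vert g\Vert_2^2 .
\]
Since $P_t^\omega$ preserves nonnegativity, integrating the resulting differential inequality for $\frac{\drm}{\drm t}\Vert P_t^\omega f\Vert_2^2$ yields $\Vert P_t^\omega f\Vert_2\le\euler^{(-\Lambda+h(\omega)/2)t}\Vert f\Vert_2$ for every $f\ge0$.

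Next I would feed this into the hypothesis. Fix $\omega\in\ell^\infty(X)$; for $f\in\ell^2(X)$ with $f\ge0$, the hypothesis, the decay estimate, and $\int_{a(x)}^{b(x)}\euler^{(-2\Lambda+h(\omega))s}\,\drm s\le(b(x)-a(x))\,\euler^{-2\Lambda a(x)+h(\omega)b(x)}$ (valid since $\Lambda,h(\omega)\ge0$ and $a(x)\le b(x)$) combine to $\phi(x,h(\omega))(P_T^\omega f)(x)\le(b(x)-a(x))^{1/2}\euler^{-\Lambda a(x)+\frac12h(\omega)b(x)}\Vert f\Vert_2$. Since $(P_T^\omega)^\ast=P_T^{-\omega}$ we have $(P_T^\omega f)(x)=\langle f,P_T^{-\omega}\delta_x/m(x)\rangle$; testing with the nonnegative $\ell^2$-function $f=P_T^{-\omega}\delta_x/m(x)$ gives $\Vert P_T^{-\omega}\delta_x/m(x)\Vert_2\le(b(x)-a(x))^{1/2}\phi(x,h(\omega))^{-1}\euler^{-\Lambda a(x)+\frac12h(\omega)b(x)}$, and the same with $-\omega$ in place of $\omega$ (using $h(-\omega)=h(\omega)$) and $y$ in place of $x$ bounds $\Vert P_T^\omega\delta_y/m(y)\Vert_2$. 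Since $(P_T^\omega)^2=\euler^\omega P_{2T}\euler^{-\omega}$, evaluating at $x$ gives $\euler^{\omega(x)-\omega(y)}p_{2T}(x,y)=\langle P_T^\omega\delta_y/m(y),\,P_T^{-\omega}\delta_x/m(x)\rangle$, so Cauchy--Schwarz and the two norm bounds produce
\[
p_{2T}(x,y)\le\euler^{\omega(y)-\omega(x)}\,\frac{(b(x)-a(x))^{1/2}(b(y)-a(y))^{1/2}}{\phi(x,h(\omega))\,\phi(y,h(\omega))}\,\euler^{-\Lambda(a(x)+a(y))+\frac12h(\omega)(b(x)+b(y))} .
\]

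It remains to optimise over $\omega$, which is where the Gaussian-type correction appears. Take $\omega=\omega_N:=\lambda(\rho(\cdot,y)\wedge N)\in\ell^\infty(X)$ with $\lambda:=\frac1S\arsinh(rS/(2T))$ and let $N\to\infty$. The truncation keeps $\omega_N$ $\lambda$-Lipschitz for $\rho$, so by the monotonicity of $u\mapsto u^{-2}(\cosh u-1)$ on $(0,\infty)$, the bound $|\nabla_{uv}\omega_N|\le\lambda\rho(u,v)\le\lambda S$ on edges, and the intrinsic-metric inequality $\sum_v b(u,v)\rho(u,v)^2\le m(u)$, we obtain $h(\omega_N)\le\frac{2}{S^2}(\cosh(\lambda S)-1)=\sigma(r,2T)$, the equality because $\cosh(\lambda S)=\sqrt{1+\sinh^2(\lambda S)}=\sqrt{1+r^2S^2/(2T)^2}$. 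Using the monotonicity of $\phi(x,\cdot)$ to replace $h(\omega_N)$ by $\sigma(r,2T)$ in the previous display, letting $N\to\infty$ so that $\omega_N(y)-\omega_N(x)\to-\lambda r$, and inserting the identity
\[
\lambda r=T\,\sigma(r,2T)+\zeta_S(r,2T)
\]
(which follows, using $rS\arsinh(rS/(2T))=\lambda rS^2$, by expanding $\sigma$ and $\zeta_S$), the factor $\euler^{-\lambda r+\frac12\sigma(r,2T)(b(x)+b(y))}$ becomes $\euler^{\frac{b(x)+b(y)-2T}{2}\sigma(r,2T)-\zeta_S(r,2T)}$, which is exactly the asserted bound.

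The main obstacle is this last step: seeing that the Davies perturbation by a truncated multiple of the distance function, together with the intrinsic-metric inequality and the elementary convexity estimate $\cosh u-1\le\frac{\cosh(\lambda S)-1}{(\lambda S)^2}u^2$ for $|u|\le\lambda S$, produces a bound on $h(\omega_N)$ equal to $\sigma(r,2T)$, and that the accompanying exponential cost $\euler^{-\lambda r}$ assembles with it into precisely the graph Davies function $\zeta_S$ via the displayed identity. The remaining ingredients are routine: the differential inequality for $\Vert P_t^\omega f\Vert_2^2$ on the form domain, the passage $N\to\infty$, and the (mild, and in all applications such as Proposition~\ref{prop:l2meanvalue} satisfied) monotonicity of $\phi(x,\cdot)$.
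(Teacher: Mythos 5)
The paper does not prove this proposition at all: it is imported verbatim from \cite[Theorem~5.3]{KellerRose-22a}, so there is no internal proof to compare against. Your argument is the standard Davies perturbation scheme on which that reference is based, and all the main steps check out: the form identity $\langle\Delta_\omega g,g\rangle=\tfrac12\sum b(\nabla g)^2+\tfrac12\sum b\,g(u)g(v)\nabla_{uv}\euler^\omega\nabla_{uv}\euler^{-\omega}$ with $\nabla_{uv}\euler^\omega\nabla_{uv}\euler^{-\omega}=-2(\cosh\nabla_{uv}\omega-1)$ gives $\Vert P_t^\omega\Vert_{2\to2}\le\euler^{(h(\omega)/2-\Lambda)t}$; testing the hypothesis with $f=P_T^{-\omega}\delta_x/m(x)\ge0$ and using $(P_T^\omega)^\ast=P_T^{-\omega}$, $(P_T^\omega)^2=\euler^\omega P_{2T}\euler^{-\omega}$ and Cauchy--Schwarz yields the pre-optimized bound; and with $\omega=\lambda(\rho(\cdot,y)\wedge N)$, $\lambda=\tfrac1S\arsinh(\rho(x,y)S/2T)$, the intrinsic-metric inequality and $\cosh u-1\le\frac{\cosh(\lambda S)-1}{(\lambda S)^2}u^2$ give $h(\omega)\le\sigma(\rho(x,y),2T)$, and your identity $\lambda\rho(x,y)=T\sigma(\rho(x,y),2T)+\zeta_S(\rho(x,y),2T)$ is correct, so the exponentials assemble exactly into the stated bound. (The differentiation of $t\mapsto\Vert P_t^\omega f\Vert_2^2$ and the use of Green's formula on the relevant domain are glossed over, but this is standard and handled in the cited source.)

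The one substantive caveat is the step you yourself flag: your proof produces $\phi(x,h(\omega_N))$ and $\euler^{\frac12 h(\omega_N)(b(x)+b(y))}$, and while $h(\omega_N)\le\sigma(\rho(x,y),2T)$ lets you enlarge the exponential, replacing $\phi(x,h(\omega_N))$ by $\phi(x,\sigma(\rho(x,y),2T))$ in the denominator needs $\phi(x,\cdot)$ non-increasing, which is \emph{not} among the hypotheses of the proposition as stated; in general $h(\omega_N)<\sigma$ strictly, and one cannot always force $h(\omega)=\sigma$ exactly while keeping $\omega(x)-\omega(y)=\lambda\rho(x,y)$. So as a proof of the literal statement there is a (small) gap, which you would close either by adding monotonicity of $\phi(x,\cdot)$ as a hypothesis or by stating the conclusion with $\inf$ over admissible $\omega$. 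This is harmless for the paper: in the only application (the proof of Theorem~\ref{thm:main1}, where by Proposition~\ref{prop:l2meanvalue} one takes $\phi(x,s)^{-1}\propto\Gamma(r(t,x))\,(1+\tau(2r(t,x))^2 s)^{\frac n4+\frac12}/\sqrt{\tau^{\frac n2+1}r(t,x)^2 m(B(r(t,x)))}$), the map $s\mapsto\phi(x,s)$ is indeed non-increasing, so your argument proves everything that is actually used.
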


Combining these propositions in a suitable way proves Theorem~\ref{thm:main1}.
\begin{proof}[Proof of Theorem~\ref{thm:main1}]
$SV(R_1,R_2)$ yields $SV(r/2,r)$ for all radii $r\in[2R_1,R_2]$.
For $t\geq 2R_1^2$, $x\in B(R_2/4)$,  we set
\begin{align*}
R:= 2r(t,x)=2\rho(o,x)+2\left(\sqrt{\frac{t}{2}}\wedge \frac{R_2}{4}\right).
\end{align*}
Then,
\[
 R\in [2R_1,R_2]\quad \text{and}\quad x\in {B(R/2)}.
\]
We infer from
Proposition~\ref{prop:l2meanvalue} for all $t\geq 2R_1^2$, $x\in B(R_2/4)$, $\tau\in(0,1]$, $\omega\in\ell^\infty(X)$, and $f\in \ell^2(X,m)$, $f\geq 0$
\begin{align*}
(P_{\frac{t}{2}}^\omega f)^2(x)
&\leq \sup_{(s,y)\in [\frac{t}{2}-\tau {R^2}/{8},\frac{t}{2}+\tau {R^2}/{8}]\times B(R/2)}(P_s^\omega f)^2(y)
\\
&
\leq 
\frac{C_{d,n,\beta} \Gamma(R/2)^2}{\tau^{\frac{n}{2}+1}R^2m(B(R))}(1+\tau R^2h(\omega))^{\frac{n}{2}+1}
\int\limits_{\frac{t}{2}-\tau R^2}^{\frac{t}{2}+\tau R^2}\Vert P_s^\omega f\Vert_2^2\ \drm s.
\end{align*}
For $t\geq 2R_1^2$ and $x\in B(R_2/4)$ set
\[
a(x)=\frac{t}{2}-\tau (2r(t,x))^2,\quad b(x)=\frac{t}{2}+\tau (2r(t,x))^2,
\]
and
\begin{align*}
\phi(x)^{-1}&:=\sqrt{C_{d,n,\beta} }
\frac{\Gamma(r(t,x))}{\sqrt{\tau^{\frac{n}{2}+1}r(t,x)^2m(B(r(t,x)))}}(1+\tau (2r(t,x))^2h(\omega))^{\frac{n}{4}+\frac12}.
\end{align*}
Proposition~\ref{theorem:daviesabstractgraph} yields for any $t\geq 2 R_1^2$, $x,y\in B(o,R_{2}/4)$, and $\tau\in(0,1]$ 
\begin{align*}
&p_t(x,y)=p_{2(\frac{t}{2})}(x,y)\\
&\leq 
\frac
{
(b(x)-a(x))^{\frac{1}{2}}(b(y)-a(y))^{\frac{1}{2}}
\exp\left(\frac{b(x)+b(y)-t}{2}\sigma(\rho(x,y),t)\right)
}
{\phi\big(x,\sigma(\rho(x,y),t)\big)\phi\big(y,\sigma(\rho(x,y),t)\big)}
\\
&\quad \cdot\exp\big(-\Lambda(a(x)+a(y))-\zeta_S(\rho(x,y),t)\big)
\\
&
=
\frac{2C_{d,n,\beta}\Gamma(r(t,x))\Gamma(r(t,y))\tau^{-\frac{n}{2}}}{\sqrt{m(B(r(t,x)))m(B(r(t,y)))}}
\exp\left(\tau\frac{(2r(t,x))^2+(2r(t,y))^2}{2}\sigma(\rho(x,y),t)\right)
\\
&\quad\cdot(1+\tau (2r(t,x))^2\sigma(\rho(x,y),t))^{\frac{n}{4}+\frac{1}{2}}
(1+\tau (2r(t,y))^2\sigma(\rho(x,y),t))^{\frac{n}{4}+\frac{1}{2}}
\\
&\quad\cdot\exp\big(-\Lambda(t-\tau((2r(t,x))^2+(2r(t,y))^2))-\zeta_S(\rho(x,y),t)\big).
\end{align*}
We estimate the upper bound for the heat kernel further and start with the volume terms. Since $t\geq 2R_1^2$ and $4 R_1\leq R_2$ we have 
$
r(t,x)\geq \sqrt{\tfrac{t}{2}}\wedge \tfrac{R_2}{4}\geq R_1.
$
Thus, volume doubling implies
\[
\frac{1}{m(B(r(t,x)))}\leq \frac{1}{m(B(\sqrt{t/2}\wedge R_2/4))}
\leq 
\frac{C_D\left(\frac{\sqrt{t}\wedge R_2}{\sqrt{t/2}\wedge R_2/4}\right)^d}{m(B(\sqrt{t}\wedge R_2))}
\leq \frac{4^dC_D}{m(B(\sqrt{t}\wedge R_2))}.
\]

Now we choose $\tau\in(0,1]$ and estimate the remaining terms. 
We let
\[
\tau=1\wedge \left(\frac{t\wedge R_2^2}{(2r(t,x))^2+(2r(t,y))^2}\right)\wedge\left(\frac{1}{((2r(t,x))^2+(2r(t,y))^2)\sigma(\rho,t)}\right)
\]
satisfying $\tau\in(0,1]$
and immediately get 
\[
(1+\tau(2r(t,x))^2\sigma(\rho(x,y),t))^{\frac{n}{4}+\frac{1}{2}}\leq 2^{\frac{n}{4}+\frac{1}{2}},
\]
a similar estimate for $r(t,x)$ replaced by $r(t,y)$, and 
\[
\exp\left(\tau\frac{(2r(t,x))^2+(2r(t,y))^2}{2}\sigma(\rho(x,y),t)\right)
\leq \euler.
\]
Moreover, we obtain
\[
\exp\big(-\Lambda(t-\tau((2r(t,x))^2+(2r(t,y))^2))-\zeta_S(\rho(x,y),t)\big)\!
\leq\! \exp\big(-\Lambda(t-t\wedge R_2^2)-\zeta_S(\rho(x,y),t)\big).
\]

Now, we estimate $\tau^{-\frac{n}{2}}$. 
By the definition of $r(t,z)$, $z\in \{x,y\}$, we have
\[
(2r(t,z))^2\leq 8(\rho(o,z)^2+t\wedge R_2^2).
\]
Since \[\sigma(\rho,t)=\frac{2}{S^2}\left(\sqrt{1+\frac{\rho^2S^2}{t^2}}-1\right),\]
 we obtain
\begin{align*}
&\frac{1}{\tau}
\leq 
1\vee \left(\frac{(2r(t,x))^2+(2r(t,y))^2}{t\wedge R_2^2}\right)
\vee\left(((2r(t,x))^2+(2r(t,y))^2)\sigma(\rho,t)\right)
\\
&\leq 16\cdot 
1\vee\! \left(\frac{\rho(o,x)^2+\rho(o,y)^2}{t\wedge R_2^2}+1\right)
\vee\left(\!\left(\frac{\rho(o,x)^2+\rho(o,y)^2}{t}+1\right)\frac{2}{S^2}\left(\sqrt{t^2+\rho^2S^2}-t\right)\!\right)\!.
\end{align*}
Plugging in the above estimates into the upper bound for $p_t(x,y)$ yields the claim.
\end{proof}

	\section{Isoperimetric and Sobolev inequalities}\label{section:iso}
	
	In this section we show that isoperimetric inequalities formulated in terms of intrinsic metrics imply Sobolev inequalities. These results will be used in Section~\ref{section:antitrees} in order to obtain heat kernel upper bounds for anti-trees.
	
	For a graph $ b $ over $ (X,m) $, an intrinsic metric $ \rho $, $ U\subset X $ and $ n\in (2,\infty] $, let
	\begin{align*}
		h_{U,n}= \inf_{W\subseteq U \ \mbox{{\scriptsize finite}}}\frac{b\rho (\partial W)}{m(W)^{\frac{n-2}{n}}},
	\end{align*}
	where 
	\begin{align*}
		\partial W = \{(x,y)\in W\times(X\setminus W) \cup (X\setminus W) \times W\mid b(x,y)>0   \}.
	\end{align*}
Note that $h_{U,\infty}$ is also known as the Cheeger constant of $U$. It was shown in \cite{BauerKW-15} that the bottom of the spectrum can be bounded in terms of $h_{U,\infty}$. These considerations directly extend to bound the first Dirichlet eigenvalue of a finite set. In the following we adapt this proof in order to obtain a lower bound on the Sobolev constant instead. In order to do so, we collect some lemmas.

The following lemma is well-known, we included a proof in the Appendix \ref{appendix} for the reader's convenience.
	\begin{lemma}\label{lem:decreasingfunction}
		For a decreasing function $ F:[0,\infty)\to[0,\infty) $ and $ \alpha \in(0,1] $,
		\begin{align*}
			\left(\frac{1}{\alpha}\int_{0}^{\infty}F(t)t^{\frac{1}{\alpha}-1}\drm t\right)^{\alpha}\leq \int_{0}^{\infty}F(t)^{\alpha}\drm t.
		\end{align*}
	\end{lemma}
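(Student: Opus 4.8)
The plan is to reduce the claimed inequality to the case $\alpha=1$ by a clever substitution that exploits the monotonicity of $F$. First I would introduce the layer-cake / distribution-function viewpoint: since $F$ is decreasing, for each $\lambda\ge 0$ the super-level set $\{t\ge 0 : F(t)>\lambda\}$ is an interval $[0, \mu(\lambda))$ for some $\mu(\lambda)\in[0,\infty]$, and $\mu$ is itself a decreasing function on $[0,\infty)$. By Fubini (layer-cake),
\[
\int_0^\infty F(t)^\alpha\,\drm t = \int_0^\infty \mu\bigl(\lambda^{1/\alpha}\bigr)\,\drm\lambda
\qquad\text{and}\qquad
\int_0^\infty F(t)\, t^{\frac{1}{\alpha}-1}\,\drm t = \frac{1}{\alpha}\int_0^\infty \mu(\lambda)^{1/\alpha}\,\drm\lambda,
\]
where the second identity uses $\int_0^{\mu(\lambda)} t^{1/\alpha-1}\,\drm t = \alpha\,\mu(\lambda)^{1/\alpha}$. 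Thus the asserted inequality becomes, after writing $g(\lambda):=\mu(\lambda)^{1/\alpha}$,
\[
\left(\int_0^\infty g(\lambda)\,\drm\lambda\right)^{\alpha} \le \int_0^\infty g(\lambda)^{\alpha}\,\drm\lambda .
\]

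So the crux is the elementary fact that for $\alpha\in(0,1]$ and any nonnegative $g$, the ``$\alpha$-norm'' dominates the ``$1$-norm raised to the $\alpha$'' — equivalently, $\|g\|_{L^1}^\alpha \le \|g^\alpha\|_{L^1}$. I would prove this by superadditivity of $s\mapsto s^\alpha$ together with a continuity/approximation step, or more slickly: if $\int g^\alpha = \infty$ there is nothing to prove, and otherwise set $G(\lambda)=\int_\lambda^\infty g$; then $-\frac{\drm}{\drm\lambda} G^\alpha = \alpha G^{\alpha-1} g \ge \alpha\, g^{\alpha}$ a.e. (since $G(\lambda)\ge \int_\lambda^\infty \Eins_{\{g>g(\lambda)\text{ nearby}\}}\dots$ — cleaner: since $g \le G(\lambda')$ is false in general, I instead use $G(\lambda)\ge \int_\lambda^{\lambda'} g$ and let $\lambda'\downarrow\lambda$; the clean statement is $G(\lambda)^{\alpha-1} g(\lambda)\ge g(\lambda)^\alpha$ because $G(\lambda)\ge$ … ). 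The honest cleanest route avoids this: just note $\left(\int_0^\infty g\right)^\alpha \le \int_0^\infty g^\alpha$ follows from the case of two points by iteration and a monotone limit, namely $\left(a+b\right)^\alpha \le a^\alpha + b^\alpha$ for $a,b\ge 0$, $\alpha\in(0,1]$, extended to countable sums and then to integrals via simple-function approximation from below (monotone convergence applies to both sides since $g\mapsto g^\alpha$ is monotone).

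Putting the pieces together: the layer-cake identities convert both sides of Lemma~\ref{lem:decreasingfunction} into integrals of $\mu(\lambda)^{1/\alpha}$ against Lebesgue measure in $\lambda$, one inside an $\alpha$-th power over $[0,\infty)$ and one not, and the reduction $\left(\int g\right)^\alpha \le \int g^\alpha$ closes the argument. The main obstacle — really the only subtle point — is justifying the two Fubini/layer-cake identities when $F$ (hence $\mu$) may take the value $+\infty$ or fail to be integrable: I would handle this by first truncating, replacing $F$ by $F\wedge N$ and restricting to $t\in[1/N,N]$, proving the inequality there with all quantities finite, and then passing to the limit $N\to\infty$ using monotone convergence on both sides (the right-hand side is monotone in $N$, and on the left the integrand $F(t)t^{1/\alpha-1}$ is also monotone in the truncation). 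Everything else is a routine computation of the elementary integral $\int_0^{a}t^{1/\alpha-1}\drm t = \alpha a^{1/\alpha}$ and bookkeeping of constants.
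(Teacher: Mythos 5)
Your layer-cake identities are both correct as displayed: $\int_0^\infty F(t)^\alpha\,\drm t=\int_0^\infty \mu(\lambda^{1/\alpha})\,\drm\lambda$ and $\tfrac1\alpha\int_0^\infty F(t)\,t^{1/\alpha-1}\,\drm t=\int_0^\infty\mu(\lambda)^{1/\alpha}\,\drm\lambda$. The gap comes in the very next sentence, where you declare that with $g:=\mu^{1/\alpha}$ the lemma reduces to $\left(\int g\right)^\alpha\le\int g^\alpha$. That step silently replaces $\int_0^\infty \mu(\lambda^{1/\alpha})\,\drm\lambda$ by $\int_0^\infty g(\lambda)^\alpha\,\drm\lambda=\int_0^\infty\mu(\lambda)\,\drm\lambda$, and these two integrals are genuinely different for $\alpha\neq 1$ (the first equals $\int F^\alpha$, the second equals $\int F$). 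So the ``reduced'' inequality is not a reformulation of the lemma.

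Worse, the ``elementary fact'' you then invoke is itself false for Lebesgue measure on $(0,\infty)$. Take $\alpha=1/2$ and $g=\Eins_{[0,1/4]}$: then $\bigl(\int_0^\infty g\bigr)^{1/2}=1/2$ while $\int_0^\infty g^{1/2}=1/4$, so $\bigl(\int g\bigr)^\alpha>\int g^\alpha$. The discrete inequality $(a+b)^\alpha\le a^\alpha+b^\alpha$ does not pass to integrals via simple functions: for $g=\sum_i c_i\Eins_{E_i}$ you would need $|E_i|^\alpha\le|E_i|$ for each piece, which fails as soon as $|E_i|<1$, i.e.\ exactly in the limit of refining partitions. (It \emph{does} work for counting measure, where every atom has mass $1$; that is why the finite-sum statement is true, and why it does not survive the continuum limit.) Thus neither the reduction nor the target of the reduction is correct.

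The paper's proof is short and entirely different in spirit: set $g(T)=\tfrac1\alpha\int_0^T F(t)t^{1/\alpha-1}\drm t$ and $h(T)=\bigl(\int_0^T F^\alpha\bigr)^{1/\alpha}$, note $g(0)=h(0)=0$, and compare derivatives. Using that $F$ is non-increasing gives $\int_0^T F^\alpha\ge T F(T)^\alpha$, hence (since $1/\alpha-1\ge0$) $h'(T)\ge\tfrac1\alpha T^{1/\alpha-1}F(T)=g'(T)$, so $g\le h$, and letting $T\to\infty$ and raising to the power $\alpha$ finishes. If you want to salvage the layer-cake viewpoint, the correct reduced statement is $\bigl(\int_0^\infty\mu^{1/\alpha}\bigr)^\alpha\le\alpha\int_0^\infty\mu(r)r^{\alpha-1}\drm r$, which by the pseudo-inverse duality is just the original lemma read backwards with $\mu$ in place of $F$; it does not become elementary, and you still need an argument like the paper's monotone comparison.
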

	
	As usual, we denote for a function $ f:X\to\RR $ and $ t\in\RR $
	\begin{align*}
		\{ f> t\}=\{ x\in X\mid f(x)>t \}.
	\end{align*}
	
	The following lemma is found in \cite[Lemma~10.8]{KellerLW-21}.
	\begin{lemma}[Co-area formula] For $ w\colon X\times X\to[0,\infty) $ and  $ f:X\to \RR $, we have 
		\begin{align*}
			\sum_{x,y\in X}w(x,y)\vert f(x)-f(y)\vert &=\int_{0}^{\infty}w(\partial \{f>t\} )\drm t.
		\end{align*}
	\end{lemma}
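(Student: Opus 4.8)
The plan is to prove the identity one ordered pair at a time and then interchange summation and integration. For a fixed pair $(x,y)\in X\times X$, I would first determine the set of levels $t$ for which $(x,y)$ lies in $\partial\{f>t\}$. By the definition of $\partial W$ we have $(x,y)\in\partial\{f>t\}$ precisely when $b(x,y)>0$ and exactly one of the inequalities $f(x)>t$, $f(y)>t$ holds, i.e.\ precisely when $b(x,y)>0$ and $\min\{f(x),f(y)\}\le t<\max\{f(x),f(y)\}$. For $f\ge 0$ --- which is the case needed in the sequel; for signed $f$ one would have to integrate over all of $\RR$ --- this is a half-open interval contained in $[0,\infty)$ of length $|f(x)-f(y)|$, and this also correctly covers the degenerate case $f(x)=f(y)$, where the interval is empty. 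Thus
\[
\int_{0}^{\infty}\mathbf{1}_{\{(x,y)\in\partial\{f>t\}\}}\,\drm t=\mathbf{1}_{\{b(x,y)>0\}}\,|f(x)-f(y)|.
\]

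Next I would carry out the interchange. Writing $w(\partial\{f>t\})=\sum_{x,y\in X}w(x,y)\,\mathbf{1}_{\{(x,y)\in\partial\{f>t\}\}}$, the map $t\mapsto w(\partial\{f>t\})$ is a countable sum of (scaled) indicators of intervals, hence Borel measurable on $[0,\infty)$, and all summands are non-negative; so Tonelli's theorem --- or monotone convergence applied to the partial sums over finite subsets of $X\times X$ --- permits exchanging the sum and the integral, which gives
\[
\int_{0}^{\infty}w(\partial\{f>t\})\,\drm t=\sum_{x,y\in X}w(x,y)\,\mathbf{1}_{\{b(x,y)>0\}}\,|f(x)-f(y)|,
\]
both sides being understood as elements of $[0,\infty]$. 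Since the weights $w$ occurring in the applications vanish off the edge set (they are $b$, or $b\rho$, or a similar product with $b$), the factor $\mathbf{1}_{\{b(x,y)>0\}}$ is redundant and the right-hand side equals $\sum_{x,y\in X}w(x,y)|f(x)-f(y)|$, which is the claim.

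There is no substantial obstacle here; the argument is elementary. The only two points one must not overlook are the range of integration --- for the interval $[\min\{f(x),f(y)\},\max\{f(x),f(y)\})$ to sit inside the domain $[0,\infty)$ one needs $f\ge 0$ (or an integral over $\RR$) --- and the harmless discrepancy coming from the condition $b(x,y)>0$ built into $\partial W$, which disappears because the weights used in Section~\ref{section:iso} are supported on edges. Measurability of the integrand and the sum--integral interchange are then immediate from non-negativity.
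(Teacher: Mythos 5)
Your argument is correct and is the standard one: the paper gives no proof of this lemma, referring instead to \cite[Lemma~10.8]{KellerLW-21}, whose proof is exactly this pointwise layer-cake computation for each pair $(x,y)$ followed by Tonelli. Your two side remarks are also accurate readings of how the statement is meant to be used: it is applied with $f=\phi^{2}\ge 0$ and with $w=b\rho$, which vanishes off the edge set, so the restriction to $t\in[0,\infty)$ and the condition $b(x,y)>0$ built into the definition of $\partial W$ cause no discrepancy.
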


The next lemma is an adapted version of the area formula found in \cite[Lemma~10.9]{KellerLW-21}. This corresponds to the special case $\alpha=1$.
	\begin{lemma}[Area formula] For $ f:X\to [0,\infty)$ and $ 0<\alpha \leq  1$
		\begin{align*}
			\frac{1}{\alpha }	\sum_{x\in X}m(x) f(x)^{\frac{1}{\alpha}} &=\int_{0}^{\infty}m(\{f>t\} )t^{\frac{1}{\alpha}-1}\drm t.
		\end{align*}
	\end{lemma}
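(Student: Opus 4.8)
The plan is to prove the Area formula for general $\alpha \in (0,1]$ by reducing to the layer-cake representation of $f^{1/\alpha}$ and then applying Tonelli's theorem, in complete analogy with the proof of the $\alpha = 1$ case in \cite[Lemma~10.9]{KellerLW-21}. The starting point is the elementary identity, valid for any $s \geq 0$,
\[
s^{\frac{1}{\alpha}} = \frac{1}{\alpha}\int_0^\infty \Eins_{\{s > t\}}\, t^{\frac{1}{\alpha}-1}\,\drm t,
\]
which follows from $\frac{\drm}{\drm t} t^{1/\alpha} = \frac{1}{\alpha} t^{1/\alpha - 1}$ and $t^{1/\alpha}\big|_0^s = s^{1/\alpha}$ (here one uses $1/\alpha \geq 1 > 0$ so that the lower limit contributes nothing). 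Applying this with $s = f(x)$ gives
\[
f(x)^{\frac{1}{\alpha}} = \frac{1}{\alpha}\int_0^\infty \Eins_{\{f(x) > t\}}\, t^{\frac{1}{\alpha}-1}\,\drm t
\]
for every $x \in X$.

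Next I would multiply by $m(x)$ and sum over $x \in X$:
\[
\frac{1}{\alpha}\sum_{x\in X} m(x)\, f(x)^{\frac{1}{\alpha}}
= \frac{1}{\alpha}\sum_{x\in X} m(x) \cdot \frac{1}{\alpha}\int_0^\infty \Eins_{\{f(x)>t\}}\, t^{\frac{1}{\alpha}-1}\,\drm t.
\]
Wait --- more carefully: starting from $\sum_x m(x) f(x)^{1/\alpha}$ and substituting the identity, we get
\[
\sum_{x\in X} m(x)\, f(x)^{\frac{1}{\alpha}}
= \sum_{x\in X} m(x) \cdot \frac{1}{\alpha}\int_0^\infty \Eins_{\{f(x)>t\}}\, t^{\frac{1}{\alpha}-1}\,\drm t.
\]
Since all summands and integrands are non-negative, Tonelli's theorem permits interchanging the sum over $x$ with the integral over $t$, yielding
\[
\sum_{x\in X} m(x)\, f(x)^{\frac{1}{\alpha}}
= \frac{1}{\alpha}\int_0^\infty \Big(\sum_{x\in X} m(x)\,\Eins_{\{f(x)>t\}}\Big)\, t^{\frac{1}{\alpha}-1}\,\drm t
= \frac{1}{\alpha}\int_0^\infty m(\{f>t\})\, t^{\frac{1}{\alpha}-1}\,\drm t.
\]
Multiplying both sides by $1/\alpha$ gives exactly the asserted identity.

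There is no real obstacle here; the only points requiring a word of care are the justification of the pointwise layer-cake identity at the endpoint $t \to 0$ (harmless since $1/\alpha - 1 \geq 0$, so $t^{1/\alpha-1}$ stays bounded near $0$, and in any case the integral is over the bounded interval $(0, f(x))$) and the appeal to Tonelli, which is legitimate precisely because $m$ is a (countably additive, $\sigma$-finite since $X$ is countable) measure, $f \geq 0$, and $t \mapsto t^{1/\alpha - 1}$ is non-negative and locally integrable. No compactness of balls or finiteness of $X$ is needed. The case $\alpha = 1$ recovers the area formula of \cite[Lemma~10.9]{KellerLW-21} verbatim, since then $1/\alpha - 1 = 0$ and the identity reads $\sum_x m(x) f(x) = \int_0^\infty m(\{f>t\})\,\drm t$.
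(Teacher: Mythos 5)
Your proof is correct and is essentially the same argument as the paper's, just run in the opposite direction: the paper starts from the integral $\int_0^\infty t^{1/\alpha-1} m(\{f>t\})\,\drm t$, expands $m(\{f>t\})$ as a sum of indicators, swaps sum and integral, and evaluates the resulting $\int_0^{f(x)} t^{1/\alpha-1}\,\drm t = \frac{1}{\alpha}f(x)^{1/\alpha}$, whereas you start from the sum, insert the layer-cake identity for $f(x)^{1/\alpha}$, and swap. Both are Tonelli in the same place; no substantive difference.
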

	\begin{proof}
		We calculate
		\begin{align*}
			\int_{0}^{\infty}t^{\frac{1}{\alpha}-1}m(\{f>t\} )\drm t&	=
			\int_{0}^{\infty}t^{\frac{1}{\alpha}-1}\sum_{x\in \{f>t\} }m(x)\drm t	=
			\int_{0}^{\infty}t^{\frac{1}{\alpha}-1}\sum_{x\in X }m(x)\Eins_{\{f>t\}}(f(x))\drm t\\
			&	=
			\sum_{x\in X} m(x)\int_{0}^{\infty} t^{\frac{1}{\alpha}-1}1_{\{f>t\}}(f(x))\drm t	=
			\sum_{x\in X}m(x) \int_{0}^{f(x)} t^{\frac{1}{\alpha}-1}\drm t\\
			&	= \frac{1}{\alpha}
			\sum_{x\in X} m(x){f(x)}^{\frac{1}{\alpha}}.
		\end{align*}
		This settles the claim.
	\end{proof}

	\begin{theorem}[Sobolev inequality and isoperimety]\label{t:sobolev} For $ n\in(2,\infty] $, $ \phi\in \cC_{c}(U) $ and $ C>0 $ we have
		\begin{align*}
			\frac{h_{U,n}}{C}
			\left(\frac{n}{n-2}\sum_{x\in X}m(x)\vert\phi{(x)}\vert^{\frac{2n}{n-2}}\right)^{\frac{n-2}{n}}\leq \sum_{x,y\in X}b(x,y)(\nabla_{xy}\phi)^{2}+\frac{1}{C^{2}}\sum_{x\in X}m(x)\phi(x)^{2}. 	 
		\end{align*}
	\end{theorem}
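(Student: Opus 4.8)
The plan is to mimic the classical Cheeger-type argument from \cite{BauerKW-15, KellerLW-21}, applying the co-area and area formulas to the function $f=|\phi|^{2n/(n-2)\cdot\alpha}$ for a suitable exponent, and then using $h_{U,n}$ to pass from the boundary term to a volume term. Concretely, set $\alpha=\frac{n-2}{n}\in(0,1)$ (so $1/\alpha=\frac{n}{n-2}$) and write $g=|\phi|$. Apply the area formula (Lemma, with this $\alpha$) to $f=g^{1/\alpha}=g^{n/(n-2)}$, giving $\frac1\alpha\sum_x m(x)g(x)^{1/\alpha^2}=\int_0^\infty m(\{g^{1/\alpha}>t\})t^{1/\alpha-1}\,\drm t$. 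Hmm — one must be careful with which power lands on the Sobolev exponent; the cleanest route is to run the co-area formula on $g^{1/\alpha}$ and then invoke the isoperimetric inequality $b\rho(\partial\{g^{1/\alpha}>t\})\ge h_{U,n}\,m(\{g^{1/\alpha}>t\})^{(n-2)/n}=h_{U,n}\,m(\{g^{1/\alpha}>t\})^{\alpha}$, so that

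\begin{align*}
\sum_{x,y}w(x,y)\bigl|g^{1/\alpha}(x)-g^{1/\alpha}(y)\bigr|
=\int_0^\infty w(\partial\{g^{1/\alpha}>t\})\,\drm t
\ge h_{U,n}\int_0^\infty m(\{g^{1/\alpha}>t\})^{\alpha}\,\drm t,
\end{align*}
where $w(x,y)=b(x,y)(\nabla_{xy}|\phi|\cdot\text{something})$ — the honest choice of $w$ will be $b$ itself but paired against a telescoped/Leibniz estimate for $\nabla(g^{1/\alpha})$. Then apply Lemma~\ref{lem:decreasingfunction} to $F(t)=m(\{g^{1/\alpha}>t\})$ (which is decreasing) to turn $\int_0^\infty m(\{g^{1/\alpha}>t\})^{\alpha}\,\drm t$ into a lower bound by $\bigl(\frac1\alpha\int_0^\infty m(\{g^{1/\alpha}>t\})t^{1/\alpha-1}\,\drm t\bigr)^{\alpha}$, and recognize the inside via the area formula as $\sum_x m(x)g(x)^{1/\alpha^2}=\sum_x m(x)|\phi(x)|^{2n/(n-2)}$ — this is where the exponent $\frac{2n}{n-2}$ and the constant $\frac{n}{n-2}$ on the left-hand side of the theorem come from.

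The remaining task is to bound $\sum_{x,y}b(x,y)\bigl|\,|\phi(x)|^{1/\alpha}-|\phi(y)|^{1/\alpha}\,\bigr|$ from above by the right-hand side of the claimed inequality. Here I would use the elementary inequality $\bigl|a^{p}-b^{p}\bigr|\le p\,(a^{p-1}+b^{p-1})|a-b|$ (or $\bigl|a^p-b^p\bigr|\le p\max(a,b)^{p-1}|a-b|$) with $p=1/\alpha=\frac{n}{n-2}$, so that, writing $\psi=|\phi|^{1/\alpha-1}=|\phi|^{2/(n-2)}$,
\begin{align*}
\sum_{x,y}b(x,y)\bigl|\,|\phi(x)|^{1/\alpha}-|\phi(y)|^{1/\alpha}\,\bigr|
\le \tfrac{1}{\alpha}\sum_{x,y}b(x,y)\bigl(\psi(x)+\psi(y)\bigr)\,|\nabla_{xy}|\phi|\,|.
\end{align*}
Now split via Cauchy–Schwarz in the form $ab\le \tfrac12(Ca^2+C^{-1}b^2)$ applied with $a$ carrying $|\nabla_{xy}|\phi||$ and $b$ carrying $(\psi(x)+\psi(y))$, sum in $x,y$, and use $|\nabla_{xy}|\phi||\le|\nabla_{xy}\phi|$ together with symmetry of $b$; the Dirichlet-form term $\sum_{x,y}b(x,y)(\nabla_{xy}\phi)^2$ appears with a tunable constant, and the other term becomes $\sum_x\bigl(\sum_y b(x,y)\bigr)\psi(x)^2/m(x)\cdot m(x)$ — but crucially one wants this to collapse to $\sum_x m(x)|\phi(x)|^{2}$, not $\sum_x \deg(x)\psi(x)^2$. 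This is exactly the point where the \emph{intrinsic metric} enters: one does \emph{not} split $\psi(x)+\psi(y)$ trivially but instead keeps the metric weight, using the telescoping along the ``jump'' and the intrinsic condition $\sum_y b(x,y)\rho^2(x,y)\le m(x)$ — following the structure of the proof of Lemma~10.8/Theorem in \cite{KellerLW-21}. I expect the main obstacle to be precisely this bookkeeping: arranging the Leibniz splitting so that after applying Cauchy–Schwarz one invokes the intrinsic-metric inequality (rather than a degree bound) to recover $\frac{1}{C^2}\sum_x m(x)\phi(x)^2$ with the stated constant $C$, and tracking all the exponent arithmetic so that $\psi^2=|\phi|^{4/(n-2)}$ combines with the leftover power of $|\phi|$ to give exactly $|\phi|^2$ — this forces a second, internal application of Hölder/Cauchy–Schwarz with the right conjugate exponents, and one must check it all balances. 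Once the upper bound is in place, chaining it with the co-area/area/Lemma~\ref{lem:decreasingfunction} lower bound and dividing through yields the theorem; the case $n=\infty$ is handled separately (or by a limiting argument), where $\alpha=1$, $1/\alpha=1$, the area formula degenerates to its stated $\alpha=1$ special case, and the argument reduces to the usual Cheeger estimate.
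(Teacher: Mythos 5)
Your skeleton (area formula $\to$ decreasing-function lemma $\to$ isoperimetric estimate $\to$ co-area formula $\to$ Young's inequality with the intrinsic-metric condition) is exactly the paper's, but you picked the wrong level-set function, and this is precisely what causes the bookkeeping failure you flag at the end.

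You run the machinery on $f=g^{1/\alpha}=|\phi|^{n/(n-2)}$ with $\alpha=\tfrac{n-2}{n}$ and then assert that the area formula recovers $\sum_x m(x)|\phi(x)|^{2n/(n-2)}$. That is an arithmetic error: for this $f$ the area formula produces $\sum_x m(x)\,g(x)^{1/\alpha^2}=\sum_x m(x)|\phi(x)|^{(n/(n-2))^2}$, and $\bigl(\tfrac{n}{n-2}\bigr)^2\neq\tfrac{2n}{n-2}$ except when $n=4$. The same mismatch shows up on the Dirichlet-form side: after your elementary inequality $|a^p-b^p|\le p(a^{p-1}+b^{p-1})|a-b|$ you are stuck with $\psi^2=|\phi|^{4/(n-2)}$ in the zero-order term, which is $|\phi|^2$ only when $n=4$. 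You correctly sensed this — ``one must check it all balances'' — but it does not balance, and no ``second internal Hölder'' will repair a wrong choice of $f$: the powers are forced once $f$ is fixed.

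The correct choice is $f=\phi^2$, and with it everything closes with no auxiliary inequality at all. With $\alpha=\tfrac{n-2}{n}$ and $f=\phi^2$ the area formula gives exactly $\frac{n}{n-2}\sum_x m(x)|\phi(x)|^{2n/(n-2)}=\int_0^\infty m(\{\phi^2>t\})\,t^{n/(n-2)-1}\,\drm t$, which is the desired left-hand side. The co-area formula (applied with weight $w=b\rho$, not $b$ — your ``honest choice'' paragraph is off here, the $\rho$ must stay) then produces $\sum_{x,y}b(x,y)\rho(x,y)\,|\phi^2(x)-\phi^2(y)|$, and the crucial simplification is that this difference \emph{factors exactly}: $|\phi^2(x)-\phi^2(y)|=|\phi(x)-\phi(y)|\,|\phi(x)+\phi(y)|$. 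No $|a^p-b^p|$ estimate is needed. Young's inequality with parameter $C$ applied to the two factors $|\phi(x)-\phi(y)|$ and $\rho(x,y)|\phi(x)+\phi(y)|$ then yields the Dirichlet form with coefficient $C$, and the second term becomes $\frac{1}{C}\sum_{x,y}b(x,y)\rho(x,y)^2\phi(x)^2\le\frac{1}{C}\sum_x m(x)\phi(x)^2$ by the intrinsic-metric condition — this is the single place $\rho$ being intrinsic is used, and it gives $|\phi|^2$ directly because the factorization already put $|\phi|^1$ (not $|\phi|^{2/(n-2)}$) on the undifferentiated factor. Dividing by $C$ produces the stated $\frac{h_{U,n}}{C}$ on the left and $\frac{1}{C^2}$ on the zero-order term.

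So the gap is concrete: wrong level-set function, hence wrong exponents, hence the unresolvable bookkeeping you noticed. Replace $f=|\phi|^{n/(n-2)}$ by $f=\phi^2$, drop the $|a^p-b^p|$ device in favor of the exact factorization of $\phi^2(x)-\phi^2(y)$, and keep $\rho$ in the co-area weight; then your outline becomes the paper's proof.
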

	\begin{proof} We employ the area formula with $ \alpha =(n-2)/n	 $, the basic inequality from Lemma~\ref{lem:decreasingfunction}, the co-area formula, the definition of $h_{U,n}$, and the fact that $\rho$ is intrinsic to obtain for $ \phi\in \cC_{c}(U) $
		\begin{align*}
		&	h_{U,n}	\left(\frac{n}{n-2}\sum_{x\in X}m(x)\vert\phi{(x)}\vert^{\frac{2n}{n-2}}\right)^{\frac{n-2}{n}}
			=
			h_{U,n}	\left(\frac{n}{n-2}\int_{0}^{\infty} m(\{\phi^{2}>t\})t^{\frac{n}{n-2}-1} \drm t\right)^{\frac{n-2}{n}}\\
			&\leq
			h_{U,n}\int_{0}^{\infty} m(\{\phi^{2}>t\})^{\frac{n-2}{n}} \drm t
			\leq \int_{0}^{\infty} b\rho (\partial \{\phi^{2}>t\}) \drm t
			=
			\sum_{x,y\in X}b \rho(x,y)\vert  \phi^{2}(x)-\phi^{2}(y)\vert \\
			&\leq \frac{1}{2}\left(2C\sum_{x,y\in X}b(x,y)( \phi(x)-\phi(y))^{2}+
			\frac{1}{2C}\sum_{x,y\in X}b(x,y)\rho(x,y)^2(\phi(x)+
			\phi(y))^{2}\right)\\
			&\leq
			{C}\sum_{x,y\in X}b(x,y)( \phi(x)-\phi(y))^{2}+\frac{1}{C}\sum_{x\in X}m(x)\phi(x)^{2}. 	 
		\end{align*}
		This proves the claim.
	\end{proof}
	
	\section{Anti-trees}\label{section:antitrees}
	In this section we show that anti-trees of a certain growth fulfill the assumptions of Theorem~\ref{thm:main1}. This is can be achieved by reducing the analysis on the anti-tree to a one-dimensional problem. 
	
	We consider here graphs $ b $ with standard weights that is $ b(x,y)\in\{0,1\} $, $ x,y\in X $.  For a connected graph $ b $ over $ X $, we denote  by $$  S_{k}=\{x\in X\mid d(x,o)=k\} ,\qquad k\in\mathbb{Z}, $$ the distance spheres with respect to the combinatorial graph distance $ d $ to the root $ o\in X $. Clearly, $ S_{-k}=\emptyset $ for $ k\in \NN $. We furthermore denote
	\begin{align*}
		|x|=d(x,o),\qquad x\in X.
	\end{align*}
	
	For a so-called sphere function $ s:\NN  \longrightarrow \NN$ the corresponding anti-tree is a graph $ b $ with standard weights over a set $X$
	such that for the distance spheres we have 
	$$  \#S_{k}=s_{k}  $$ for $ k\ge 1$,
	\begin{align*}
		b(x,y)=b(y,x)=1,\qquad x\in S_{k},y\in S_{k+1}
	\end{align*}
	for $ k\ge0  $ and $ b(x,y)=0 $ otherwise. 
	For convenience later on, we set $ s_{0}=1 $ and $ s_{-1}=0 $. 
	Moreover, we equip $ X $ with the counting measure $ m=1 $ and denote for $ k\in\mathbb{Z} $ the combinatorial balls by
	\begin{align*} 		A_{k}=S_{0}\cup\ldots\cup S_{k}. 	
	\end{align*}

	\begin{lemma}[Characterization anti-trees] Let $ b $ be connected  a graph with standard weights such that there are no edges within spheres. The following statements are equivalent:
		\begin{itemize}
			\item [(i)] $ b  $ is an anti-tree.
			\item [(ii)]  For any  vertices $ x,y,x',y' $ such that $ |x|=|x'| $ and $ |y|=|y'| $, there is a graph isomorphism which interchanges $ x $ and $ x' $ as well as $ y $ and $ y' $.
		\end{itemize}	
	\end{lemma}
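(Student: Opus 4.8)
The plan is to prove the equivalence by establishing both implications directly from the definition of an anti-tree, using that the graph is connected with no edges inside spheres.

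\medskip

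\textbf{Direction (i) $\Rightarrow$ (ii).} Suppose $b$ is an anti-tree with sphere function $s$. The key observation is that an anti-tree is, up to isomorphism, completely determined by the sphere sizes $(s_k)_{k\ge 0}$: between consecutive spheres $S_k$ and $S_{k+1}$ the induced subgraph is the complete bipartite graph $K_{s_k,s_{k+1}}$, and there are no other edges. I would first fix $x,y,x',y'$ with $|x|=|x'|=:k$ and $|y|=|y'|=:\ell$. The plan is to build the desired isomorphism $\Phi\colon X\to X$ as a ``sphere-preserving'' permutation: choose for each $j\ge 0$ a bijection $\pi_j\colon S_j\to S_j$, and set $\Phi|_{S_j}=\pi_j$. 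Any such $\Phi$ is automatically a graph automorphism, since $x\sim y$ in an anti-tree depends only on which spheres $x$ and $y$ lie in (namely adjacent ones), and $\Phi$ preserves spheres. It then remains to choose $\pi_k$ to swap $x$ and $x'$ (possible since $|S_k|\ge 2$ if $x\ne x'$, and take $\pi_k=\mathrm{id}$ if $x=x'$) and similarly $\pi_\ell$ to swap $y$ and $y'$; if $k=\ell$ one chooses a single permutation of $S_k$ that simultaneously sends $x\mapsto x'$ and $y\mapsto y'$, which exists because a transposition or product of two transpositions realizes any such partial assignment on a set of size $\ge 2$ (if $x=y$ then necessarily $x'=y'$ and one reduces to the single-point case). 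All remaining $\pi_j$ are taken to be the identity.

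\medskip

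\textbf{Direction (ii) $\Rightarrow$ (i).} Assume (ii). Since there are no edges within spheres by hypothesis, it suffices to show that for every $k\ge 0$, every vertex of $S_k$ is adjacent to every vertex of $S_{k+1}$ (and to no vertex of $S_j$ for $j\notin\{k-1,k+1\}$; the latter is automatic from the definition of distance spheres, as edges only join vertices in the same or adjacent spheres). So fix $k\ge 0$ and $u\in S_k$, $v\in S_{k+1}$; I must show $b(u,v)=1$. By connectedness there exists at least one edge between $S_k$ and $S_{k+1}$, say between $u'\in S_k$ and $v'\in S_{k+1}$ (this is where connectedness enters: the spheres up to level $k$ cannot be disconnected from the rest). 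Now apply (ii) with the four vertices $u,u',v',v$, i.e.\ with $|u|=|u'|=k$ and $|v'|=|v|=k+1$: there is a graph isomorphism interchanging $u\leftrightarrow u'$ and $v'\leftrightarrow v$. Since isomorphisms preserve edges and $b(u',v')=1$, we get $b(u,v)=1$. Varying $u,v$ shows $S_k$ and $S_{k+1}$ span a complete bipartite graph, and since this holds for all $k$, together with $\#S_k=s_k$ this is precisely the definition of an anti-tree with sphere function $s$.

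\medskip

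\textbf{Main obstacle.} The only genuinely delicate point is the case $k=\ell$ (equal sphere indices) in direction (i) $\Rightarrow$ (ii), where one needs a \emph{single} permutation of one sphere realizing a prescribed two-point-to-two-point assignment; the degenerate sub-cases ($x=x'$, $y=y'$, $x=y$, or the sphere having only one vertex) must be checked but are elementary. A minor care point throughout is to keep track of the convention $S_j=\emptyset$ for $j<0$ and $s_0=1$, so that the root sphere is handled consistently; none of this poses a real difficulty.
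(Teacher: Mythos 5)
Your proof is correct and mirrors the paper's argument in both directions, the only real difference being that you prove (ii)$\Rightarrow$(i) directly (use connectedness to find a seed edge between $S_k$ and $S_{k+1}$, then transport it via the isomorphism from (ii)) whereas the paper argues by contraposition; these are the same argument. One remark on (i)$\Rightarrow$(ii): the case analysis you flag as the ``main obstacle'' when $k=\ell$ is avoidable, since the paper simply takes the permutation that swaps $x\leftrightarrow x'$, swaps $y\leftrightarrow y'$, and fixes all other vertices; this is automatically well defined because the two transpositions must be either disjoint or identical (any partial overlap of $\{x,x'\}$ with $\{y,y'\}$ would already make the required interchange unsatisfiable as a map), and it is an automorphism of the anti-tree since any two vertices in a common sphere have the same set of neighbours.
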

	\begin{proof}
		(i) $ \Longrightarrow $ (ii): For an anti-tree the  map which interchanges $ x $ with $ x' $ and $ y $ with $ y' $ and leaves every other vertex invariant is a graph isomorphism since $ x $ and $ x' $ as well as $ y $ and $ y' $ have the same neighbors each.
		
		(ii) $ \Longrightarrow $ (i): The forward graph $ F_{x} $ of a vertex $ x $ with respect to a root $o  $ is a subgraph of $ b $ induced by the vertices $ z\in \bigcup_{k\ge |x|}S_{k} $ such that $ d(x,z)=k $ for $ z\in S_{k} $. 
		
		Clearly, a graph isomorphism which interchanges $ x $ and $ x' $ with $ |x|=|x'| $ also has to map the forward graph $ F_{x} $ to $ F_{x'} $ and vice versa. 		
		If $ b $ is not an anti-tree, then there is a vertex $ x $ which is not connected to some $ y\in S_{|x|+1} $, i.e., $ y\not\in F_{x} $. Furthermore, there is $ x' \in S_{|x|}$ which is connected to $ y $, i.e., $ y\in S_{|x'|+1} $. Thus, there is no graph isomorphism interchanging $ x $ and $ x' $ and which keeps $ y=y' $ invariant:  if there was such a $ \iota $, then $$ 0=b(x,y)= b(\iota(x'),\iota(y))=b(x',y) =1$$
		which is a contradiction.
	\end{proof}

	A function $ f:X\longrightarrow\RR $ is called \emph{spherically symmetric}, if there is a function $ \tilde f:\NN_{0}\longrightarrow \RR $ such that
	\begin{align*}
		f(x)=\tilde f(|x|),\qquad x\in X.
	\end{align*}

	For a spherically symmetric function $ f $, the corresponding Laplacian acts as
	\begin{align*}
		\Delta f(x)= s_{|x|-1}(\tilde f(|x|)-\tilde f(|x|-1))+s_{|x|+1}(\tilde f(|x|)-\tilde f(|x|+1)).
	\end{align*}
	
	To a given anti-tree with sphere function $ s $, we can associate a one-dimensional weighted graph $ \tilde  b$ over $ (\tilde X,\tilde m) $
	with \begin{align*}
		\tilde X&=\NN_{0},\\
		\tilde m(k)&=m(S_{k})=s_{k},\\
		\tilde b(k,k+1)&=\tilde b(k+1,k)=s_{k}s_{k+1}
	\end{align*}
	for $ k\ge0 $ and $\tilde b(k,l)=0 $ otherwise. For the corresponding Laplacian $ \tilde \Delta $ and a spherically symmetric function $ f $ and $ x\in X $ we have
	\begin{align*}
		\Delta f(x)=\tilde \Delta \tilde f(|x|).
	\end{align*}

	\begin{lemma}\label{l:1d}
		For an anti-tree $ b $ over $ (X,m) $ with heat kernel $ p $ and corresponding graph $ \tilde b  $ over $ (\NN_{0},\tilde m) $ with heat kernel $ \tilde p $, we have
		\begin{align*}
			p_{t}(x,y)=\tilde p_{t}(|x|,|y|),\qquad  t\ge0,
		\end{align*}
 for 	$ x,y\in X $  with either $ |x|\neq |y| $ or $ x=y $.
	\end{lemma}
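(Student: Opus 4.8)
The plan is to exploit the spherical symmetry of the anti-tree together with uniqueness of the minimal heat semigroup. The key observation is that $\tilde b$ over $(\NN_0,\tilde m)$ is, up to the isomorphism $x\mapsto |x|$, precisely the quotient of $b$ by the group of sphere-preserving graph automorphisms, and that the heat kernel respects this symmetry. More precisely, for fixed $y\in X$ I would consider the function $u_t(x):=p_t(x,y)$ as the minimal nonnegative solution of the heat equation with initial data $m(y)^{-1}\Eins_{\{y\}}$ (in the $\ell^2$-sense: $P_t(\Eins_{\{y\}}/m(y))(x)=p_t(x,y)$). The strategy is then: (1) average $u_t$ over the sphere $S_{|x|}$ to produce a spherically symmetric solution; (2) identify this averaged solution with $\tilde p_t(\,\cdot\,,|y|)$ pulled back from $(\NN_0,\tilde m)$; (3) argue that when $|x|\neq|y|$ (or $x=y$) the averaging does nothing, because $x\mapsto p_t(x,y)$ is already constant on $S_{|x|}$.

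Here is how I would carry this out. First I would make precise the symmetry statement: by the Characterization lemma, for any two vertices $x,x'$ in the same sphere there is a graph automorphism $\iota$ of $b$ fixing the root, interchanging $x$ and $x'$, and fixing every other vertex; such $\iota$ is measure-preserving since $m\equiv 1$ and commutes with $\Delta$, hence with $P_t$ and with the heat kernel: $p_t(x,z)=p_t(\iota x,\iota z)$ for all $z$. Taking $z=y$ with $|y|\neq|x|$ forces $\iota y=y$ (automorphisms preserve combinatorial distance to the root, and $\iota$ moves only $x,x'$ which lie in a different sphere than $y$), so $p_t(x,y)=p_t(x',y)$; the case $x=y$ is handled by choosing an automorphism interchanging $x$ and $x'$ and noting $p_t(x,x)=p_t(x',x')$ while also $p_t(x',x)$ need not equal these, which is exactly why the hypothesis excludes the generic $|x|=|y|$, $x\neq y$ case. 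Thus $x\mapsto p_t(x,y)$ is spherically symmetric in $x$ under the stated hypothesis, so there is $q_t(k,|y|)$ with $p_t(x,y)=q_t(|x|,|y|)$.

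Second, I would show $q_t(\,\cdot\,,|y|)=\tilde p_t(\,\cdot\,,|y|)$. For this, observe that spherically symmetric $f$ on $X$ correspond bijectively to functions $\tilde f$ on $\NN_0$ via $f=\tilde f\circ|\cdot|$, and this correspondence intertwines $\Delta$ with $\tilde\Delta$ (stated in the excerpt) and is isometric from $\ell^2(X,m)$ restricted to spherically symmetric functions onto $\ell^2(\NN_0,\tilde m)$, since $\sum_{x\in S_k} m(x)=s_k=\tilde m(k)$. Pulling back, $t\mapsto \tilde p_t(\,\cdot\,,|y|)\circ|\cdot|$ solves the heat equation on $X$; one checks it is the minimal nonnegative solution with the correct initial data by comparing with $p_t(\cdot,y)$ averaged over $S_{|y|}$, and invoking uniqueness of the minimal semigroup (equivalently, that $\tilde p$ is obtained from $\tilde\Delta$'s Friedrichs extension, which is the pullback of $\Delta$'s on the symmetric subspace). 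Concretely: $\frac{1}{s_{|y|}}\sum_{y'\in S_{|y|}}p_t(x,y')$ is spherically symmetric in $x$, solves the heat equation, has initial data $\frac{1}{s_{|y|}}\Eins_{S_{|y|}}$, hence equals $\tilde P_t(\frac{1}{\tilde m(|y|)}\Eins_{\{|y|\}}\tilde m(|y|)\cdot\frac{1}{s_{|y|}})\circ|\cdot|$; unwinding gives $\frac{1}{s_{|y|}}\sum_{y'\in S_{|y|}}p_t(x,y')=\tilde p_t(|x|,|y|)$. Under the hypothesis the left side equals $p_t(x,y)$ by the first step, so $p_t(x,y)=\tilde p_t(|x|,|y|)$.

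The main obstacle I anticipate is the rigorous justification of the averaging-and-pullback identification of minimal heat kernels, i.e.\ that the pullback of the Friedrichs extension $\tilde\Delta$ is exactly the restriction of the Friedrichs extension $\Delta$ to the closed subspace of spherically symmetric $\ell^2$-functions, and that this subspace is invariant under $(P_t)$. The invariance follows cleanly from the automorphism-averaging (the projection onto symmetric functions commutes with $P_t$ because it is an average of the unitaries $f\mapsto f\circ\iota$, each commuting with $\Delta$), and the form-core argument — that finitely supported symmetric functions on $X$ correspond to finitely supported functions on $\NN_0$ and the quadratic forms match — handles the identification; the compact-balls Assumption guarantees these cores are form-dense. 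Everything else is bookkeeping with the co-area--free identity $\sum_{x\in S_k}m(x)=\tilde m(k)$.
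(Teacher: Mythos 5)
Your approach is the same as the paper's in outline: use the Characterization lemma to produce graph automorphisms that commute with the Laplacian and hence with the semigroup, conclude that $p_t(x,y)$ depends only on $(|x|,|y|)$ (under the stated constraint), and reduce to the one-dimensional model. Where you diverge is the identification step. The paper directly declares that $q_t(|x|,|y|):=p_t(x,y)$ solves the heat equation for $\tilde b$ with initial data $\Eins_{\{|y|\}}(|x|)$ and invokes minimality. You instead pass through the spherical average $\bar p_t(x,|y|):=\tfrac{1}{s_{|y|}}\sum_{y'\in S_{|y|}}p_t(x,y')$, which is genuinely spherically symmetric in $x$ at every vertex (so the intertwining $\Delta\leftrightarrow\tilde\Delta$ applies without caveat, unlike $p_t(\cdot,y)$ itself, which is not constant on $S_{|y|}$) and whose initial data $\tfrac{1}{s_{|y|}}\Eins_{S_{|y|}}$ pushes down exactly to $\tilde p_0(\cdot,|y|)=\Eins_{\{|y|\}}/\tilde m(|y|)$. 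This averaged identification $\bar p_t=\tilde p_t\circ|\cdot|$ is a cleaner argument than the paper gives, and combined with the automorphism invariance it correctly yields $p_t(x,y)=\tilde p_t(|x|,|y|)$ for $|x|\neq|y|$.

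There is a gap at the very last step, though. You conclude "under the hypothesis the left side equals $p_t(x,y)$ by the first step," but the first step gives $p_t(x,y')=p_t(x,y)$ for $y'\in S_{|y|}$ only when $|x|\neq|y'|$; for $x=y$ it only gives $p_t(y,y)=p_t(x',x')$ for $x'\in S_{|y|}$, not $p_t(y,y)=p_t(y,y')$ for $y'\in S_{|y|}\setminus\{y\}$. These are genuinely different: since all neighbours of any $z\in S_{|y|}$ lie in spheres different from $S_{|y|}$, subtracting the heat equations for $p_t(y,y)$ and $p_t(y',y)$ gives $\partial_t\big(p_t(y,y)-p_t(y',y)\big)=-(s_{|y|-1}+s_{|y|+1})\big(p_t(y,y)-p_t(y',y)\big)$ with initial value $1$, so $p_t(y,y)-p_t(y',y)=\euler^{-(s_{|y|-1}+s_{|y|+1})t}>0$. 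Hence $\bar p_t(y,|y|)=\tilde p_t(|y|,|y|)$ is \emph{strictly} smaller than $p_t(y,y)$ whenever $s_{|y|}>1$, and your averaging argument does not deliver the diagonal case $x=y$. The paper's own argument also passes over this point without comment (and its claimed initial data for $q$ differs from $\tilde p_0$ by the factor $\tilde m(|y|)$), so you are in good company, but as written your proof only establishes the lemma for $|x|\neq|y|$.
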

	\begin{proof}
		By the characterization of anti-trees for any $ x,y,x',y' $ such that $ |x|=|x'| $ and $ |y|=|y'| $ (and $ y=y' $ if and only if $ x=x' $), there is a graph isomorphism $ \iota $ interchanging $ x $ and $ x' $ as well as  $ y $ and $ y' $. This means that $ \iota $ extends to a unitary operator on $\ell^{2}(X,m)  $ via $ f\mapsto f\circ \iota $ which commutes with the Laplacian. Thus, it commutes with the semigroup. Hence, 	$ p_{t}(x,y)=	p_{t}(x',y') $ for all such $ x,y,x',y' $. In particular, the heat kernel  is spherically symmetric about $o$ in both variables, so there is a function $ q $ such that $ p_{t}(x,y)=q_{t}(|x|,|y|) $, $ x,y\in X $ with $ x=y $ if $ |x|=|y| $, $ t\ge0 $. Specifically, $ q $ solves the heat equation for $ \tilde b $, i.e.,
		\begin{align*}
			\tilde \Delta q_{t}(|x|,|y|)= \Delta p_{t}(x,y)=\partial_{t} p_{t}(x,y)=\partial_{t} q_{t}(|x|,|y|)
		\end{align*}
		and $ q_{0}(|x|,|y|)=\Eins_{|y|}(|x|) $. Since $(t,x)\mapsto p_{t} (x,y)$ is the smallest positive solution to the heat equation for $ b $, so is $ q $ for $ \tilde b $ and, hence, $ q $ is the heat kernel for $ \tilde b $.
	\end{proof}

	From now on, we will choose the intrinsic metric $ \rho  $ to be the intrinsic degree metric. Recall that for a graph $ b $ over $ (X,m) $ the intrinsic degree metric  is given by
	\begin{align*}
		\rho(x,y)=\inf_{x=x_{0}\sim \ldots\sim x_{k}=y}\sum_{j=0}^{k-1}\left(\frac{1}{\Deg(x_{j})}\wedge \frac{1}{\Deg(x_{j+1})}\right)^{1/2}
	\end{align*}
	for $ x,y\in X $,
	where for antitrees the weighted degree is given by
	\begin{align*}
		\Deg(x)=\frac{1}{m(x)}\sum_{y\in X}b(x,y)=s_{|x|-1}+s_{|x|+1}
	\end{align*}
	for $ x  \in X$. 
	
 For an antitree $ b $ and the associated one-dimensional graph $ \tilde b$, we denote the corresponding intrinsic degree metric $ \tilde \rho $ for $\tilde b  $ over $ (\NN_{0}, \tilde m) $.
	
	\begin{lemma}[Intrinsic metric]\label{l:intrinsic}	For $ x,y  \in X $ such that  either $ |x|\neq |y| $ or $ x=y $,
		\begin{align*}
			\rho(x,y)=\tilde \rho(|x|,|y|).
		\end{align*}
	\end{lemma}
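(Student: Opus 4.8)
The plan is to reduce the claim about the intrinsic degree metric $\rho$ on the anti-tree to the intrinsic degree metric $\tilde\rho$ on the one-dimensional model graph $\tilde b$ over $(\NN_0,\tilde m)$, exploiting the fact that both metrics are defined via a path infimum and that anti-trees have a very rigid sphere structure. First I would record the weighted degrees explicitly: on the anti-tree $\Deg(x)=s_{|x|-1}+s_{|x|+1}$ depends only on $|x|$, and on $\tilde b$ one computes $\widetilde{\Deg}(k)=\tfrac{1}{\tilde m(k)}\bigl(\tilde b(k,k-1)+\tilde b(k,k+1)\bigr)=\tfrac{1}{s_k}\bigl(s_{k-1}s_k+s_k s_{k+1}\bigr)=s_{k-1}+s_{k+1}$, so that $\Deg(x)=\widetilde{\Deg}(|x|)$ for every $x\in X$. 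Consequently, for any edge $x\sim y$ in the anti-tree (necessarily with $|y|=|x|\pm1$), the edge weight $\bigl(\tfrac{1}{\Deg(x)}\wedge\tfrac{1}{\Deg(y)}\bigr)^{1/2}$ appearing in the path sum equals exactly the corresponding weight $\bigl(\tfrac{1}{\widetilde{\Deg}(|x|)}\wedge\tfrac{1}{\widetilde{\Deg}(|y|)}\bigr)^{1/2}$ for the edge $|x|\sim|y|$ in $\tilde b$.

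Next I would prove the two inequalities. For $\rho(x,y)\ge\tilde\rho(|x|,|y|)$: given any path $x=x_0\sim x_1\sim\dots\sim x_k=y$ in the anti-tree, the sequence of levels $|x_0|,|x_1|,\dots,|x_k|$ is a walk in $\tilde b$ from $|x|$ to $|y|$ (consecutive levels differ by exactly one), and by the weight identity above the path sum for $(x_j)$ equals the walk sum for $(|x_j|)$; since $\tilde\rho$ is the infimum over all walks (equivalently paths, after loop-erasure, which only decreases the sum), we get $\tilde\rho(|x|,|y|)\le\sum_j(\dots)$, and taking the infimum over paths in the anti-tree yields the inequality. For the reverse $\rho(x,y)\le\tilde\rho(|x|,|y|)$: given a path $|x|=k_0\sim k_1\sim\dots\sim k_\ell=|y|$ in $\tilde b$, I lift it to a path in the anti-tree by choosing, at each step, any vertex in the sphere $S_{k_{j}}$ adjacent to the previously chosen vertex — this is always possible because between consecutive spheres the anti-tree is a complete bipartite graph, so \emph{every} vertex of $S_{k_j}$ is adjacent to \emph{every} vertex of $S_{k_{j\pm1}}$. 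The lifted path starts at $x$ and can be made to end at $y$ provided $|x|\ne|y|$ (or $x=y$), which is exactly the hypothesis; here one must be slightly careful when the walk in $\tilde b$ returns to level $|x|$ or $|y|$, but since the endpoints are at distinct levels (or coincide) there is no consistency obstruction — at level $|y|$ we are free to pick $y$ itself at the final step. Again the path sums match, so $\rho(x,y)\le\tilde\rho(|x|,|y|)$.

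Combining the two inequalities gives $\rho(x,y)=\tilde\rho(|x|,|y|)$ for all $x,y$ with $|x|\ne|y|$ or $x=y$, as claimed.

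\textbf{Main obstacle.} The only subtle point is the lifting step for the reverse inequality: a priori a near-optimal walk in $\tilde b$ need not be monotone and may revisit levels, and one has to verify that the endpoint constraint ($x$ at the start, $y$ at the end) can always be met without paying extra. This is exactly where the hypothesis ``$|x|\ne|y|$ or $x=y$'' and the complete-bipartite structure between spheres are used: completeness guarantees every level-to-level move lifts freely regardless of which vertices were chosen before, and the endpoint hypothesis rules out the degenerate case $|x|=|y|$, $x\ne y$ where the level walk is a single point $k_0=k_\ell$ yet $x$ and $y$ are distinct vertices of the same sphere that one cannot join at zero cost. Everything else is the bookkeeping identity $\Deg(x)=\widetilde{\Deg}(|x|)$ together with the fact that $\tilde\rho$ can equivalently be computed over non-backtracking or loop-erased paths.
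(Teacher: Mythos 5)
Your proof is correct and follows essentially the same route as the paper's: the crucial observation $\Deg(x)=\widetilde{\Deg}(|x|)$ is identical, and your two inequalities (projection of anti-tree paths to walks on $\NN_0$, and lifting of $\NN_0$-walks back using the complete bipartite structure between spheres) are precisely the paper's remark that every path in $X$ projects to a unique path on $\NN_0$ while all lifts of a fixed $\NN_0$-path have the same length. Your version just spells out the lifting and the endpoint bookkeeping in more detail, which is where the hypothesis ``$|x|\neq|y|$ or $x=y$'' enters.
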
 
\begin{proof} 
		Observe that for the weighted degree 
$ \tilde{\Deg} $ of 
$\tilde b $	 
		we have 
	\begin{align*}
		\tilde{\Deg}(|x|)=\frac{1}{s_{|x|}}(s_{|x|}s_{|x|+1}+ s_{|x|-1}s_{|x|}) =
		(s_{|x|+1}+ s_{|x|-1})=\Deg(x). 
	\end{align*}
Moreover, for any path between $ x $ and $  y $ with either $ |x|\neq |y | $ or $ x=y $ there is a unique path on $ \NN_{0} $. On the other hand, all the  different paths in $ X $ corresponding to the same path on $ \NN_{0} $ have the same length. This settles the claim.
\end{proof}
	We denote the closed distance balls about $ x\in X $ and $ |x|\in \tilde X $ with respect to $ \rho $ and $ \tilde \rho $ by $ B_{x} (r)$ and $ \tilde B_{k}(x) $ for $ r\in\RR $. Moreover, we denote  the open balls in $X$ and $\tilde X$ by $ B_{x}^{\circ}(r) =\{y\in X\mid\rho(x,y)<r\}$  and  $ \tilde B_{|x|}^{\circ}(r) =\{|y|\in \tilde X\mid\tilde\rho(|x|,|y|)<r\} $ respectively.\medskip

	From now on for given  $ \gamma\in [0,2) $,  
	we consider the anti-tree with the sphere function 
	\begin{align*}
		s_{k-1}=[k^{\gamma}]
	\end{align*}
	$ k\ge1 $, where $ [\cdot] $ is the integer function.\medskip

The intrinsic and combinatorial distance can be estimated against each other which is elaborated in  \cite{Huang}. For convenience, we recall the argument here. For two real valued functions $ f $ and $ g $, we denote $$  f\asymp g  $$ if there is a constant $ C>0 $ such that $ C^{-1}f\leq g\leq Cf  $.
		\begin{lemma}[Distance]\label{distance}
		For $ \gamma\in[0,2) $, the corresponding anti-tree  satisfies for $ x,y\in X $ with either  $ |x|\neq |y| $ or $ x=y $
		\begin{align*}
			\rho(x,y)\asymp ||x|-|y||^{\frac{2-\gamma}{2}}.
		\end{align*} In particular, there are $ c,C\ge0 $ such for every $ r $ there is $ c\leq \theta \le C $ such that
		\begin{align*}
			B_{o}(r)=A_{\theta r^{\frac{2}{2- \gamma}} }.
		\end{align*}
	\end{lemma}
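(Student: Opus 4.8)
The plan is to compute the intrinsic degree metric $\tilde\rho$ on the one-dimensional model graph $\tilde b$ over $(\NN_0,\tilde m)$ directly, and then transfer the result to the anti-tree via Lemma~\ref{l:intrinsic}. By definition of $\tilde\rho$ and the fact that $\tilde b$ is a path graph on $\NN_0$, for $k\le \ell$ we have the exact formula
\[
\tilde\rho(k,\ell)=\sum_{j=k}^{\ell-1}\left(\frac{1}{\tilde{\Deg}(j)}\wedge\frac{1}{\tilde{\Deg}(j+1)}\right)^{1/2},
\]
since on a path the infimum over paths is attained by the unique monotone path. From the computation $\tilde{\Deg}(k)=s_{k+1}+s_{k-1}$ already recorded in the proof of Lemma~\ref{l:intrinsic}, and the choice $s_{k-1}=[k^\gamma]$, we get $\tilde{\Deg}(k)\asymp k^\gamma$ for $k\ge 1$ (the integer-part function changes things only by a bounded multiplicative factor for $k\ge1$, and the boundary cases $k=0,1$ contribute only a bounded additive constant). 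Hence each summand is $\asymp j^{-\gamma/2}$.

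\medskip
\noindent The next step is to estimate $\sum_{j=k}^{\ell-1} j^{-\gamma/2}$ by comparison with the integral $\int_k^\ell t^{-\gamma/2}\,\drm t$. Since $\gamma\in[0,2)$ we have $-\gamma/2\in(-1,0]$, so the antiderivative is $\tfrac{2}{2-\gamma}t^{(2-\gamma)/2}$ and the sum is comparable, up to universal constants, to $\ell^{(2-\gamma)/2}-k^{(2-\gamma)/2}$. The slightly delicate point is that one wants the cleaner bound $\asymp(\ell-k)^{(2-\gamma)/2}=||x|-|y||^{(2-\gamma)/2}$ rather than a difference of powers; but by the mean value theorem (or concavity/convexity of $t\mapsto t^{(2-\gamma)/2}$, which is concave here since the exponent is in $(0,1]$) one has $\ell^{(2-\gamma)/2}-k^{(2-\gamma)/2}\le(\ell-k)^{(2-\gamma)/2}$ and the reverse inequality up to a constant, at least away from $k=0$; the case $k=0$, which is $\tilde\rho(0,\ell)$, gives exactly $\ell^{(2-\gamma)/2}$ up to constants. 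Combining, $\tilde\rho(k,\ell)\asymp(\ell-k)^{(2-\gamma)/2}$, and by Lemma~\ref{l:intrinsic} this yields $\rho(x,y)\asymp||x|-|y||^{(2-\gamma)/2}$ for all $x,y$ with $|x|\neq|y|$ or $x=y$.

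\medskip
\noindent For the second assertion, fix $r\ge 0$ and write $B_o(r)$. Since $\rho(o,x)=\tilde\rho(0,|x|)\asymp|x|^{(2-\gamma)/2}$, the condition $\rho(o,x)\le r$ is equivalent, up to the two-sided multiplicative constant, to $|x|\le (\text{const})\,r^{2/(2-\gamma)}$. More precisely, if $c^{-1}k^{(2-\gamma)/2}\le\tilde\rho(0,k)\le c\,k^{(2-\gamma)/2}$ for all $k$, then $A_{[(r/c)^{2/(2-\gamma)}]}\subseteq B_o(r)\subseteq A_{[(cr)^{2/(2-\gamma)}]}$; monotonicity of $k\mapsto A_k$ then gives that $B_o(r)=A_{\theta r^{2/(2-\gamma)}}$ for some $\theta=\theta(r)\in[c^{-2/(2-\gamma)}, c^{2/(2-\gamma)}]$ (absorbing the integer-part rounding into a slightly enlarged constant range), which is the claimed statement with $c=c^{-2/(2-\gamma)}$ and $C=c^{2/(2-\gamma)}$.

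\medskip
\noindent The main obstacle, such as it is, is bookkeeping the constants carefully through the integer-part function $[k^\gamma]$ and the boundary terms near the root: one must check that $[k^\gamma]\asymp k^\gamma$ uniformly for $k\ge1$ (true, with constants $1/2$ and $1$ once $k^\gamma\ge1$, and the finitely many remaining small $k$ handled separately), and that the divergence/convergence behavior of $\sum j^{-\gamma/2}$ is governed by the integral comparison uniformly in the endpoints. None of this is deep, but it is the only place where $\gamma<2$ is genuinely used — it guarantees the exponent $(2-\gamma)/2$ is positive so that distances are unbounded and the balls $B_o(r)$ exhaust $X$.
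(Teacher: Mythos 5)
Your approach is essentially the same as the paper's: reduce to the one-dimensional model $\tilde b$ via Lemma~\ref{l:intrinsic}, write $\tilde\rho(k,\ell)$ as a telescoping sum of edge lengths using $\tilde{\Deg}(j)\asymp j^{\gamma}$, and compare the sum to $\int j^{-\gamma/2}\,\drm j$. Up through the integral comparison everything is fine. But the ``slightly delicate point'' you flag is a genuine gap, and neither your resolution nor the paper's proof closes it. Setting $\alpha=(2-\gamma)/2\in(0,1]$, the integral comparison gives $\tilde\rho(k,\ell)\asymp \ell^{\alpha}-k^{\alpha}$ for $1\le k<\ell$, and concavity gives the one-sided bound $\ell^{\alpha}-k^{\alpha}\le(\ell-k)^{\alpha}$; however, the reverse comparison fails with any uniform constant once $\gamma>0$. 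Take $\ell=k+1$ and let $k\to\infty$: then $\tilde\rho(k,k+1)\asymp k^{-\gamma/2}\to 0$, while $(\ell-k)^{\alpha}=1$. Equivalently, for adjacent spheres $S_k,S_{k+1}$ one has $\rho(x,y)\asymp k^{-\gamma/2}$ but $||x|-|y||^{(2-\gamma)/2}=1$. Your hedge ``the reverse inequality up to a constant, at least away from $k=0$'' has the geometry reversed: the comparison holds at $k=0$, and more generally when $\ell\ge 2k$, and it is precisely for $k$ large with $\ell-k$ bounded that it breaks.

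What does survive is the anchored case $k=0$, i.e.\ $y=o$, where $\sum_{j=1}^{\ell-1}j^{-\gamma/2}\asymp\ell^{(2-\gamma)/2}$ gives genuine two-sided bounds for $\rho(o,x)$; this is what Lemmas~\ref{l:volume} and~\ref{lem:isoanti} actually use, and your second paragraph deriving $B_o(r)=A_{\theta r^{2/(2-\gamma)}}$ from it is correct. The first display of the lemma, with two free points, should be weakened to the one-sided bound $\rho(x,y)\le C\,||x|-|y||^{(2-\gamma)/2}$ or restricted to $y=o$. Note this is not cosmetic: the lower bound $\rho(x,y)\ge c\,||x|-|y||^{(2-\gamma)/2}$ is exactly what the proof of Theorem~\ref{thm:antitree1} invokes to pass from $\euler^{-\zeta_1(\rho(x,y),t)}$ to $\euler^{-||x|-|y||^{2-\gamma}/(Ct)}$, so the gap propagates.
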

	\begin{proof}
		We have  $$  \mathrm{Deg}(z)=s_{|z|-1}+s_{|z|+1} \asymp |z|^{\gamma} $$ for $ z\in X $.  Then, for $ |x|>|y| $
		$$  		\rho(x,y)\asymp\sum_{j=|y|}^{|x|-1}j^{-\frac{\gamma}{2}}\asymp \int_{|y|+1}^{|x|}j^{-\frac{\gamma}{2}}dj \asymp (|x|-|y|)^{1-\frac{\gamma}{2}}.
		$$
		This proves the first statement. The "in particular" statement is a direct consequence.
	\end{proof}
	
	The following constant can be understood as the dimension of the antitree with parameter $ \gamma\in (0,2) $.
	\begin{align*}
		  d=\frac{2(\gamma+1)}{2-\gamma}.
	\end{align*}

	\begin{lemma}[Polynomial volume growth]\label{l:volume}
		For $ \gamma \in [0,2) $, $ r\ge0 $, $ x\in X $ and $ r_{x}=\rho(x,o) $, we have
		\begin{align*}
			\# B_{x}({r})=m(B_{x}({r}))=\tilde m(\tilde B_{x}({r}))\asymp
			\begin{cases}
				r^d&:r\ge r_{x} ,\\
				 rr_x^{d-1}&: r\leq r_{x}.
			\end{cases}
		\end{align*}
	In particular, for we have $ V(d,R,\infty) $ in every $ x\in X $ with $ R\ge \rho(x,o) $.
	\end{lemma}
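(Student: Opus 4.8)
The plan is to reduce everything to the one-dimensional model graph $\tilde b$ over $\NN_0$ and then read the volume growth off the sphere weights $s_l=[(l+1)^\gamma]$. Since $m\equiv1$ we have $\#B_x(r)=m(B_x(r))$. By Lemma~\ref{l:intrinsic} a vertex $y$ with $|y|\neq|x|$ lies in $B_x(r)$ precisely when $|y|\in\tilde B_{|x|}(r)$, and the fibre of $X$ over a level $l$ has $s_l$ vertices; the only level where this identification can fail is $|x|$ itself, where $B_x(r)\cap S_{|x|}$ is merely some subset of $S_{|x|}$ containing $x$. Hence
\[
\tilde m(\tilde B_{|x|}(r))-s_{|x|}+1\ \le\ m(B_x(r))\ \le\ \tilde m(\tilde B_{|x|}(r)),
\]
and, using that the levels adjacent to $|x|$ carry weight comparable to $s_{|x|}$ (this is where $\gamma<2$ enters), so that the correction $s_{|x|}$ is of lower order once $\tilde B_{|x|}(r)$ reaches a neighbouring level, it suffices to estimate $\tilde m(\tilde B_{|x|}(r))=\sum_{l\in\tilde B_{|x|}(r)}s_l$.

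Next I would use that $\NN_0$ is a path: writing $\Phi(k):=\tilde\rho(0,k)$, which is strictly increasing, one has $\tilde\rho(k,l)=|\Phi(k)-\Phi(l)|$, so with $r_x=\Phi(|x|)$,
\[
\tilde B_{|x|}(r)=\{\,l\in\NN_0\colon (r_x-r)_+\le\Phi(l)\le r_x+r\,\}=\{N_-,N_-+1,\dots,N_+\}.
\]
By Lemma~\ref{distance} (applied with $y$ the root) $\Phi(k)\asymp k^{(2-\gamma)/2}$, hence $N_\pm\asymp(r_x\pm r)_+^{2/(2-\gamma)}$ and $s_l\asymp l^\gamma$. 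The remaining sum $\sum_{l=N_-}^{N_+}s_l$ is routine; the structural point is that, under the substitution $\rho=\Phi(l)$, a level at position $l$ carries weight $s_l\asymp l^\gamma\asymp\Phi(l)^{2\gamma/(2-\gamma)}$ and occupies $\rho$-length $\tilde\rho(l,l+1)\asymp l^{-\gamma/2}\asymp\Phi(l)^{-\gamma/(2-\gamma)}$, so that $\sum_{l=N_-}^{N_+}s_l$ compares with
\[
\int_{(r_x-r)_+}^{\,r_x+r}\rho^{\,d-1}\,\drm\rho,\qquad\text{since }\frac{2\gamma}{2-\gamma}+\frac{\gamma}{2-\gamma}=\frac{3\gamma}{2-\gamma}=d-1.
\]
For $r\ge r_x$ this integral equals $\tfrac1d(r_x+r)^d\asymp r^d$; for $r\le r_x$ it equals $\tfrac1d\big((r_x+r)^d-(r_x-r)^d\big)\asymp r\,r_x^{\,d-1}$ by an elementary estimate (integrate $\rho^{d-1}$, using $d\ge1$ and $r\le r_x$). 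Combined with the sandwich from the first paragraph, this gives the two displayed cases.

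The "in particular" claim is then immediate: for $R\ge r_x$ and any $R\le r_1\le r_2$ the first case yields $m(B_x(r_i))\asymp r_i^{\,d}$, so $m(B_x(r_2))\le C_D(r_2/r_1)^d m(B_x(r_1))$, i.e.\ $V(d,R,\infty)$ holds in $x$. The steps that need care are (i) the reduction past $S_{|x|}$, i.e.\ checking that the discrepancy $s_{|x|}$ is genuinely absorbed into $\tilde m(\tilde B_{|x|}(r))$, and (ii) the annulus sum, where one must not naively subtract the two $\asymp$-estimates $N_+^{\gamma+1}$ and $N_-^{\gamma+1}$ but keep the sum (or the integral above) intact. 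I would also flag that in the regime $r\le r_x$ the stated asymptotics require $r$ to be at least of the order of the edge-lengths near level $|x|$, i.e.\ $\asymp|x|^{-\gamma/2}$ — below that $B_x(r)=\{x\}$ and the bound degenerates — but this range plays no role in the applications, where $r$ is large.
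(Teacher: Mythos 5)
Your proposal follows the same core route as the paper's proof: reduce to the one-dimensional model $\tilde b$, identify $B_x(r)$ with a $\tilde\rho$-annulus around $o$, and estimate its volume, which in both cases comes down to $\int_{(r_x-r)_+}^{r_x+r}\rho^{d-1}\,\drm\rho$. You are, however, noticeably more careful than the paper's terse proof at exactly the two places you flag. The paper asserts that $m(B_x(r))=\tilde m(\tilde B_{|x|}(r))$ ``follows directly from the definitions,'' but this fails for small $r$ because of the vertices in $S_{|x|}$ other than $x$; your sandwich $\tilde m(\tilde B_{|x|}(r))-s_{|x|}+1\le m(B_x(r))\le\tilde m(\tilde B_{|x|}(r))$ is the correct replacement and the observation that the correction $s_{|x|}$ is absorbed once a neighbouring level is reached is exactly what is needed. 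Second, the paper writes $m(B_x(r))\asymp m(B_o(r_x+r))-m(B_o(r_x-r))\asymp(r_x+r)^d-(r_x-r)_+^d$, where the second $\asymp$, read literally as subtracting two $\asymp R^d$ estimates, is not justified; your substitution $\rho=\Phi(l)$ together with the balance $s_l\asymp\Phi(l)^{2\gamma/(2-\gamma)}$ and $\tilde\rho(l,l+1)\asymp\Phi(l)^{-\gamma/(2-\gamma)}$ supplies the density argument that makes this rigorous. Finally, your remark that in the regime $r\le r_x$ the claimed two-sided bound degenerates once $r$ drops below the edge-length scale near level $|x|$ (at which point $B_x(r)=\{x\}$ while $r\,r_x^{d-1}$ can be large) identifies a genuine gap in the lemma as stated, which the paper's proof does not notice; as you say, it is harmless in the applications where $r$ is large, but strictly speaking the assertion is false for all $r\ge0$.
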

	\begin{proof}The  equalities follow directly from the  definitions. Since for $ N\in \NN_{0} $,
		\begin{align*}
			\# A_{N}=\sum_{k=0}^{N }\#S_{k}=\sum_{k=0}^{N }s_{k}\asymp
			\sum_{k=0}^{N } k^{\gamma}\asymp\int_{0}^{N}k^{\gamma}dk\asymp N^{\gamma+1}.
		\end{align*}
	Thus, by the lemma above
	\begin{align*}
			m(B_{o}(r))\asymp r^{d}.
	\end{align*}
	Now, observe that  $ m(B_{o}^{\circ}(r_{x}-r))\asymp m(B_{o}(r_{x}-r)) $ and thus
	\begin{align*}
		m(B_{x}(r))\asymp m(B_{o}(r_{x}+r)) - m(B_{o}(r_{x}-r)) \asymp (r_{x}+r)^{d}-(r_{x}-r)_{+}^{d}.
	\end{align*}
	The statement for $ r\ge r_{x} $ is clear and the statement for $ r\leq r_{x} $ follows by the mean value theorem after factoring out $ r  $ on the right hand side.
	
	The claim about volume doubling follows since $ m(B_{r}(x))=m(B_{r+r_{0}}(o)) $ for all $ r\ge r_{0}=\rho(x,o) $.
	\end{proof}
	
	Next, we estimate the isoperimetric constant of distance balls which we need to obtain a Sobolev inequality via Theorem~\ref{t:sobolev}. For the one dimensional graph $ \tilde b $ associated to an antitree and a finite set $  B\subseteq \tilde X $ recall that  $$  \tilde h_{ B,n}=\inf_{W\subseteq B}  \frac{\tilde b\tilde \rho (\partial W) }{\tilde m (W)^{\frac{n-2}{2}}}. $$

	\begin{lemma}[Isoperimetic inequality]\label{lem:isoanti}
		For $ \gamma \in [0,2) $,  $ n\ge 2d=\frac{4(\gamma+1)}{2-\gamma}\ge 2 $,  $x\in X $ and $ r\ge 1\vee \rho(o,x)$, we have
			\begin{align*}
			{\tilde h_{\tilde B_{r}(|x|),n}}\asymp  \frac{\tilde m(\tilde B_{r}(|x|))^{\frac{2 }{n}}}{r}.
		\end{align*}

	\end{lemma}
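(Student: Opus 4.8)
The plan is to compute both sides of the claimed asymptotic equivalence explicitly using the one-dimensional reduction and the distance/volume estimates already established. Recall that on the associated one-dimensional graph $\tilde b$ over $(\NN_0,\tilde m)$ the subgraph structure is linear, so for a \emph{connected} set $W\subseteq\tilde X$ the boundary $\partial W$ consists of at most two edges, namely the edges leaving the interval at its left and right endpoints. Since in the infimum defining $\tilde h_{\tilde B_r(|x|),n}$ one may restrict to connected sets $W$ (splitting a disconnected $W$ into connected components only decreases the ratio, because $\tilde m(W)^{(n-2)/n}$ is superadditive-controlled: for $n\ge 2$ the function $t\mapsto t^{(n-2)/n}$ is concave, hence subadditive, so $\tilde m(W)^{(n-2)/n}\le\sum_i\tilde m(W_i)^{(n-2)/n}$ while $\tilde b\tilde\rho(\partial W)=\sum_i\tilde b\tilde\rho(\partial W_i)$), it suffices to analyze intervals $W=[a,c]\cap\NN_0$ contained in $\tilde B_r(|x|)$.

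The next step is to estimate the two competing quantities for such an interval. By Lemma~\ref{l:intrinsic} and the definition of $\tilde b$, the edge-weight-times-squared-distance across the edge $\{j,j+1\}$ is $\tilde b(j,j+1)\tilde\rho(j,j+1)^2=s_js_{j+1}\cdot(\tilde\rho(j,j+1))^2$; since $\rho$ is intrinsic and $\tilde{\Deg}(j)=s_{j-1}+s_{j+1}\asymp j^\gamma$ by Lemma~\ref{distance}, one gets $\tilde b(j,j+1)\tilde\rho(j,j+1)\asymp s_js_{j+1}\cdot j^{-\gamma/2}\asymp j^{\gamma/2}\cdot j^\gamma = j^{3\gamma/2}$ — more precisely $\tilde b\tilde\rho(\partial W)\asymp c^{3\gamma/2}$ when $W=[a,c]$ and $a$ is bounded (the right endpoint dominates), and $\asymp c^{3\gamma/2}+a^{3\gamma/2}$ in general. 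Meanwhile $\tilde m(W)=\sum_{j=a}^{c}s_j\asymp c^{\gamma+1}-a^{\gamma+1}$. The ratio to be minimized over intervals $W\subseteq\tilde B_r(|x|)$ is therefore, up to constants,
\[
\frac{c^{3\gamma/2}+a^{3\gamma/2}}{\big(c^{\gamma+1}-a^{\gamma+1}\big)^{(n-2)/n}},
\]
and one checks that this is minimized (up to a constant) by taking $W$ to be essentially all of $\tilde B_r(|x|)$, i.e. $a$ and $c$ chosen so that $[a,c]$ is the largest admissible interval: shrinking the interval from one side decreases the numerator only like a power but can decrease the denominator comparably, and a short calculus-of-the-exponents check (using $n\ge 2d$, which forces $3\gamma/2 \ge (\gamma+1)\cdot\frac{n-2}{n}\cdot\frac{2}{2-\gamma}$-type inequalities) shows the full ball is optimal up to a multiplicative constant. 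Plugging in $c\asymp (r+r_x)^{2/(2-\gamma)}$ and $a\asymp (r_x-r)_+^{2/(2-\gamma)}$ from Lemma~\ref{distance} together with Lemma~\ref{l:volume} then yields $\tilde h_{\tilde B_r(|x|),n}\asymp \tilde m(\tilde B_r(|x|))^{2/n}/r$ after simplification, where the factor $1/r$ emerges because $\tilde b\tilde\rho(\partial W)\asymp \tilde m(\tilde B_r(|x|))/r$ (the "surface" scales like volume over radius, as in Ahlfors regular spaces of dimension $d$ with the extra $+1$ from $3\gamma/2$ vs $\gamma+1$ absorbed into the exponent $d=2(\gamma+1)/(2-\gamma)$).

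The main obstacle I anticipate is the optimization step: showing rigorously that among all intervals $W\subseteq\tilde B_r(|x|)$ the near-maximal one is optimal up to a constant, and handling the two regimes $r\le r_x$ (annular ball, $a>0$) and $r\ge r_x$ (ball containing the root, $a\asymp 0$) uniformly. In the regime $r\ge r_x$ this is straightforward since $a\asymp 0$ and the numerator and denominator are both monotone in $c$. In the regime $r\le r_x$ one must verify that neither sliding the interval toward the root nor away from it beats the full annulus; this reduces to a one-variable inequality in the ratio $a/c$ with exponents depending on $\gamma$ and $n$, which holds precisely under the hypothesis $n\ge 2d$. The lower bound $\tilde h_{\tilde B_r(|x|),n}\gtrsim \tilde m(\tilde B_r(|x|))^{2/n}/r$ (the nontrivial direction, since the upper bound follows by testing with $W=\tilde B_r(|x|)$) is exactly this optimization; I would carry it out by parametrizing $c = \lambda$, $a = \mu$ with $\mu\le\mu_0\le\lambda\le\lambda_0$ (where $\lambda_0,\mu_0$ are the extreme admissible values) and minimizing the explicit rational-exponent expression, reducing to checking it at the boundary of the parameter region and at interior critical points, all of which give the same order.
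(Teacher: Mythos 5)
Your proposal is correct and follows the same overall structure as the paper's proof: reduce the infimum over test sets $W\subseteq\tilde B_r(|x|)$ to balls $[0,c]$ centered at the origin, compute the boundary weight $\asymp c^{3\gamma/2}$ and volume $\asymp c^{\gamma+1}$, and use the hypothesis $n\ge 2d$ (which is exactly the condition that the resulting exponent of $R$, namely $-1+\tfrac{4(\gamma+1)}{n(2-\gamma)}$, is $\le 0$) to conclude that the largest admissible ball minimizes the ratio. The paper simply asserts the reduction-to-balls step without justification, whereas you sketch a justification; that is genuine added value, since it is the only non-computational step.

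That said, your route to the reduction is more elaborate than necessary. You first pass to connected components via subadditivity of $t\mapsto t^{(n-2)/n}$ and the mediant inequality, and then plan a two-parameter calculus optimization in $(a,c)$ with an anticipated case split between $r\le r_x$ and $r\ge r_x$. Two remarks. First, the hypothesis $r\ge 1\vee\rho(o,x)$ already forces $\tilde B_r(|x|)=[0,c_0]$, so the ``annular'' regime $r\le r_x$ you worry about never occurs. Second, the reduction is monotone and needs no calculus or even a decomposition into components: for any finite $W\subseteq\tilde B_r(|x|)$ set $c=\max W$; then $\partial W$ contains the edge $\{c,c+1\}$, so $\tilde b\tilde\rho(\partial W)\ge\tilde b\tilde\rho(\partial[0,c])$, while $W\subseteq[0,c]$ gives $\tilde m(W)\le\tilde m([0,c])$. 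Hence $[0,c]$ is at least as good as $W$, and the infimum may be taken over balls about $o$ directly. After that, the one-variable optimization in $c$ (equivalently $R$) reduces to the sign of the exponent, which is where $n\ge 2d$ enters — precisely as you, and the paper, compute.

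One small slip: the statement in the paper defines $\tilde h_{B,n}$ with $\tilde m(W)^{\frac{n-2}{2}}$, a typo for $\tilde m(W)^{\frac{n-2}{n}}$ (compare the general definition of $h_{U,n}$ in Section~\ref{section:iso} and the displayed computation at the end of the paper's proof); you correctly work with the exponent $(n-2)/n$, so this does not affect your argument.
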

	\begin{proof}Since $ r\ge 1\vee \rho(o,x)=:r_{0} $, we have  that $o \in  B_{r}(x) $. So,  to determine the infimum in $ \tilde h_{\tilde B_{r}(|x|),n} $, it suffices to consider balls $B_{R}= B_{R} (o)$ with $ R\le r_{0}+r $ and estimate
		\begin{align*}
			\frac{\tilde b\tilde \rho(\tilde B_{R})}{\tilde m(\tilde B_{R})^{\frac{n-2}{2}}}= 	\frac{ b \rho( B_{R})}{ m( B_{R})^{\frac{n-2}{2}}}
		.
		\end{align*}
		 For $ R\ge0 $, we recall the bound $ m(B_{R})\asymp R^{(2(\gamma+1))/(2-\gamma)}  $ from Lemma~\ref{l:volume}. 		
		Since $ 	\Deg(x)=s_{|x|-1}+s_{|x|+1} $ we have
		$ \rho_{\vert x\vert}:=	\rho(x,y)=(s_{|x|}+s_{|x|+2})^{-\frac{1}{2}} $ for $ y\in S_{|x|+1} $. 		Moreover, also by Lemma~\ref{l:volume}, for every $ R\ge0 $ there  is $c\le \theta\le C $ such that $ 	B_{R}=A_{k} $ with $ k=\theta R^{ {2}/({2- \gamma})}  $  and since  $ s_{k}\asymp k^{\gamma} $, we have
		\begin{align*}
			b\rho(\partial B_{R})=\rho_{k}\#\partial A_{k} =(s_{k}+s_{k+2})^{-\frac{1}{2}}s_{k} s_{k+1}\asymp {k}^{\frac{3}{2}\gamma} \asymp R^{ \frac{3\gamma }{2- \gamma}}.
		\end{align*}
		Now,
		\begin{align*}
			\frac{3\gamma }{(2- \gamma)} -\frac{2(\gamma+1)}{(2-\gamma)}\frac{{(n-2)}}{n}=\frac{\left( 3\gamma n - 2(\gamma+1)(n-2)\right)}{n(2-\gamma)}= -1 +\frac{2}{n}\cdot\frac{2(\gamma+1)}{(2-\gamma)}.
		\end{align*}
	 Since $ \frac{4(\gamma+1)}{n(2-\gamma)}\leq 1 $, we have
	\begin{align*}
		{\tilde h_{\tilde B_{r}(|x|),n}} =\inf_{R\le r_{0}+r}\frac{	\tilde b\tilde \rho(\partial \tilde B_{R})}{\tilde m(\tilde B_{R})^{\frac{{n-2}}{n}}}=
		\inf_{R\le r_{0}+r} R^{-1 +\frac{2}{n}\cdot\frac{2(\gamma+1)}{(2-\gamma)}}\asymp r ^{-1 +\frac{2}{n}\cdot\frac{2(\gamma+1)}{(2-\gamma)}}\asymp  \frac{\tilde m(\tilde B_{r}(|x|))^{\frac{2 }{n}}}{r}
		,
	\end{align*}
where the last estimate follows as $\tilde B_{r}(|x|)= \tilde B_{r_{0}+r}(0) $ for $ r\ge r_{0} $, i.e.~we have the bounds $m (\tilde B_{r}(|x|))\asymp r^{2(\gamma+1)/(2-\gamma)} $.
	This finishes the proof.
	\end{proof}

	We apply the estimates to obtain Sobolev inequalities by Theorem~\ref{t:sobolev}.
	
	\begin{proposition}[Sobolev inequality]\label{p:sobolev} For $ \gamma\in (0,2) $ and  $ n\ge 2d=\frac{4(\gamma+1)}{2-\gamma}\ge 2 $ there is $ C_{S}>0 $ such that for all $ r>0 $ and 
		all  functions $ \phi\in C_{c}(\tilde B_{r}) $
		\begin{align*}
			\frac{\tilde m(\tilde B_{r})^{\frac{2}{n}}}{C_{S}r^{2}}
			\left(\sum_{x\in \tilde  X}\tilde  m(x)\vert\phi{(x)}\vert^{\frac{2n}{n-2}}\right)^{\frac{n-2}{n}}\leq \sum_{x,y\in \tilde X} \tilde b(x,y)(\nabla_{x,y}\phi)^{2}+\frac{1}{r^{2}}\sum_{x\in \tilde X}\tilde  m(x)\phi(x)^{2} . 	 
		\end{align*}
	\end{proposition}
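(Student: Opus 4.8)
The plan is to combine the abstract Sobolev--isoperimetry implication of Theorem~\ref{t:sobolev} with the explicit isoperimetric estimate for anti-tree balls from Lemma~\ref{lem:isoanti}, and then to transfer both the volume bound and the Sobolev inequality from the one-dimensional model $\tilde b$ back in the form stated. First, I would fix $\gamma\in(0,2)$ and $n\ge 2d$ and work entirely on the associated one-dimensional graph $\tilde b$ over $(\NN_0,\tilde m)$. Given $r>0$ and $\phi\in\cC_c(\tilde B_r)$, set $U=\tilde B_r$ (a finite set by the compactness assumption on balls, since $\gamma<2$) and apply Theorem~\ref{t:sobolev} with the choice $C=r$. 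This yields
\[
\frac{\tilde h_{\tilde B_r,n}}{r}\left(\frac{n}{n-2}\sum_{x\in\tilde X}\tilde m(x)|\phi(x)|^{\frac{2n}{n-2}}\right)^{\frac{n-2}{n}}\le \sum_{x,y\in\tilde X}\tilde b(x,y)(\nabla_{xy}\phi)^2+\frac{1}{r^2}\sum_{x\in\tilde X}\tilde m(x)\phi(x)^2,
\]
so it remains to bound $\tilde h_{\tilde B_r,n}$ from below by a multiple of $\tilde m(\tilde B_r)^{2/n}/r$.

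For that lower bound I would distinguish two regimes in $r$. For $r\ge 1$ (equivalently $r\ge 1\vee\rho(o,o)=1$ with base point $x=o$, so $o\in\tilde B_r$), Lemma~\ref{lem:isoanti} applied with $x=o$ gives directly $\tilde h_{\tilde B_r,n}\asymp \tilde m(\tilde B_r)^{2/n}/r$, in particular a lower bound of the required shape with some absolute constant depending only on $\gamma,n$. For small $r\in(0,1)$ one has to check the claim by hand: by Lemma~\ref{distance} the ball $\tilde B_r$ contains only finitely many spheres near the root and, for $r$ below the first jump size, $\tilde B_r$ reduces to $\{0\}$, on which $\cC_c(\tilde B_r)$ consists of functions supported at the root, where the inequality is immediate (the gradient term already dominates after adjusting constants, using $\tilde m(\tilde B_r)\asymp 1$); for intermediate small $r$ one uses that $\tilde h_{\tilde B_r,n}$ over a fixed finite graph is bounded below by a positive constant and that $\tilde m(\tilde B_r)^{2/n}/r$ is controlled on the compact range of such $r$, absorbing everything into $C_S$. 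Combining the two regimes produces a single constant $C_S=C_S(\gamma,n)$ with $\tilde h_{\tilde B_r,n}\ge \tfrac{n-2}{n}\,C_S^{-1}\,\tilde m(\tilde B_r)^{2/n}/r$ for all $r>0$, and plugging this into the displayed inequality above yields exactly the asserted Sobolev inequality (the factor $\tfrac{n}{n-2}$ from Theorem~\ref{t:sobolev} is cancelled by the matching factor in the bound for $\tilde h$).

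The main obstacle I anticipate is not the large-$r$ case, which is handed to us by Lemma~\ref{lem:isoanti}, but making the estimate genuinely uniform down to $r\to 0$: the $\asymp$ in Lemmas~\ref{distance}, \ref{l:volume}, \ref{lem:isoanti} hide constants that a priori could degenerate, and Lemma~\ref{lem:isoanti} is only stated for $r\ge 1\vee\rho(o,x)$. The clean way around this is to observe that for $r<1$ the ball $\tilde B_r$ is one of finitely many fixed finite subsets of $\NN_0$ (a finite initial segment $A_k$ with $k$ bounded in terms of $\gamma$), so on each such set the first Dirichlet--Sobolev constant is a positive number and $r\mapsto \tilde m(\tilde B_r)^{2/n}/r$ ranges over a compact subset of $(0,\infty)$; a finite maximum then absorbs the small-$r$ behaviour into $C_S$. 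A secondary routine point is justifying the identification of sums over $\tilde X$ with the corresponding quantities on the anti-tree via $\tilde m(k)=s_k$ and $\tilde b(k,k+1)=s_ks_{k+1}$, and the ball identities $\tilde B_r(|x|)=\tilde B_{r_0+r}(0)$ for $r\ge r_0$, both already recorded above; these require no new ideas. Once the uniform lower bound on $\tilde h_{\tilde B_r,n}$ is in place, the proof is a one-line substitution.
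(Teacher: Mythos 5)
Your proposal is correct and follows exactly the paper's route: apply Theorem~\ref{t:sobolev} on the one-dimensional model $\tilde b$ with $C=r$, and then lower-bound $\tilde h_{\tilde B_r,n}$ via Lemma~\ref{lem:isoanti} (with $x=o$) by a constant times $\tilde m(\tilde B_r)^{2/n}/r$. The paper's own proof is precisely this one-line substitution. You correctly notice that Lemma~\ref{lem:isoanti} is only stated for $r\ge 1\vee\rho(o,x)$, so the paper's proof as written only covers $r\ge 1$; your compactness argument for the finitely many balls $\tilde B_r$ with $r<1$ (where $\tilde B_r$ is one of finitely many initial segments of $\NN_0$, with a positive Dirichlet--Sobolev constant, and the factor $\tilde m(\tilde B_r)^{2/n}/r^2$ on the left can be absorbed into $C_S$ because the zero-order term on the right already scales like $r^{-2}$) is a legitimate way to close that gap and in fact goes beyond what the paper records. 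In the applications (Theorems~\ref{thm:antitree1} and~\ref{thm:antitree2}) only $S(n,1,\infty)$ or $S(n,R_0,\infty)$ is used, so the paper's omission is harmless in practice. One small inaccuracy in your write-up: the factor $\tfrac{n}{n-2}$ in Theorem~\ref{t:sobolev} is not ``cancelled by a matching factor'' in Lemma~\ref{lem:isoanti}; since $\tfrac{n}{n-2}\ge1$ it simply strengthens the left-hand side and can be discarded, which is all that is needed.
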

	\begin{proof}
	To apply Theorem~\ref{t:sobolev}, we choose $ C=r $ and conclude  the statement  from  Lemma~\ref{lem:isoanti}.
	\end{proof}

Next we estimate the averaged degree functions
	\begin{align*}
		D_{p}(x,r)=\left(\frac{1}{m(B_{r}(x))}\sum_{ B_{r}(x)}m\mathrm{Deg}^{p}\right)^{\frac{1}{p}}\quad \tilde D_{p}(\vert x\vert,r)=\left(\frac{1}{\tilde m(\tilde B_{r}(|x|))}\sum_{\tilde B_{r}(|x|)}\tilde m{\tilde{\mathrm{Deg}}}^{p}\right)^{\frac{1}{p}},
	\end{align*}
	for $ x \in X$ and $r\geq 0$.

	\begin{lemma}\label{l:D}
		For $ \gamma\in (0,2) $ and $ x\in X $, we have for all $ r\ge \rho(x,o) $ and $ p\in[1,\infty] $
		\begin{align*}
			D_{p}(x,r)=	\tilde D_{p}(|x|,r)\asymp  r^{\frac{2\gamma}{(2-\gamma)}}.
		\end{align*}
	\end{lemma}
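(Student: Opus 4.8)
The statement to prove is Lemma~\ref{l:D}: for $\gamma\in(0,2)$, $x\in X$, $r\ge\rho(x,o)$ and $p\in[1,\infty]$ one has $D_p(x,r)=\tilde D_p(|x|,r)\asymp r^{2\gamma/(2-\gamma)}$. The equality of the two averaged degree functions is purely structural: by Lemma~\ref{l:intrinsic} the metric balls correspond, i.e.\ $\tilde B_r(|x|)$ is exactly the image of $B_r(x)$ under $z\mapsto|z|$, and by the computation in the proof of Lemma~\ref{l:intrinsic} we have $\tilde{\Deg}(|z|)=\Deg(z)=s_{|z|-1}+s_{|z|+1}$. Since $m$ is counting measure on $X$ and $\tilde m(k)=s_k=\#S_k$, the weighted sums $\sum_{B_r(x)}m\,\Deg^p$ and $\sum_{\tilde B_r(|x|)}\tilde m\,\tilde{\Deg}^p$ are literally the same sum, grouped sphere by sphere; likewise the normalizing volumes agree by Lemma~\ref{l:volume}. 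So the first equality is immediate and I would dispatch it in one or two sentences.

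\textbf{The asymptotic estimate.} For the $\asymp r^{2\gamma/(2-\gamma)}$ part I would work on the one-dimensional side. Fix $x$ and write $r_0=\rho(o,x)$; since $r\ge r_0$ we have $\tilde B_r(|x|)=\tilde B_{r_0+r}(0)=A_N$ with $N\asymp r^{2/(2-\gamma)}$ by Lemma~\ref{distance}/Lemma~\ref{l:volume}. Now $\tilde{\Deg}(k)=s_{k-1}+s_{k+1}\asymp k^\gamma$ (this bound was already used in the proof of Lemma~\ref{distance}), so the numerator is
\[
\sum_{k=0}^{N}\tilde m(k)\tilde{\Deg}(k)^p=\sum_{k=0}^{N}s_k\,(s_{k-1}+s_{k+1})^p\asymp\sum_{k=0}^{N}k^{\gamma}k^{\gamma p}=\sum_{k=0}^{N}k^{\gamma(p+1)}\asymp N^{\gamma(p+1)+1},
\]
using the standard comparison of a sum of a polynomial with its integral (and $\gamma(p+1)+1>0$). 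The denominator $\tilde m(\tilde B_r(|x|))=\#A_N\asymp N^{\gamma+1}$ by Lemma~\ref{l:volume}. Hence
\[
\tilde D_p(|x|,r)\asymp\left(\frac{N^{\gamma(p+1)+1}}{N^{\gamma+1}}\right)^{1/p}=\left(N^{\gamma p}\right)^{1/p}=N^{\gamma}\asymp r^{2\gamma/(2-\gamma)},
\]
where the last step inserts $N\asymp r^{2/(2-\gamma)}$. For the endpoint $p=\infty$ I would instead note $\tilde D_\infty(|x|,r)=\sup_{k\le N}\tilde{\Deg}(k)\asymp\sup_{k\le N}k^\gamma=N^\gamma\asymp r^{2\gamma/(2-\gamma)}$, consistent with letting $p\to\infty$ above; the implicit constants can be taken uniform in $p$ because the only $p$-dependence is in taking a $p$-th root of a quantity that is itself $\asymp N^{\gamma p}$ with $p$-independent comparison constants raised to the $1/p$, which stays bounded.

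\textbf{Main obstacle.} There is no deep obstruction here — everything reduces to the sphere-by-sphere bookkeeping and the polynomial-sum-versus-integral comparison, both of which are already in the toolbox assembled in this section. The one point requiring a little care is uniformity of the $\asymp$ constants in $p$ (the lemma claims a single asymptotic for all $p\in[1,\infty]$): I would make this explicit by bounding $\sum_{k\le N}k^{\gamma(p+1)}$ between $c^p N^{\gamma(p+1)+1}$ and $C^p N^{\gamma(p+1)+1}$ with $c,C$ independent of $p$ (both come from comparing with $\int_0^N t^{\gamma(p+1)}\,dt=N^{\gamma(p+1)+1}/(\gamma(p+1)+1)$ and noting $1\le\gamma(p+1)+1$ grows only linearly in $p$, so its $1/p$-th power stays bounded), so that after taking $p$-th roots the constants become $p$-independent, matching the $p=\infty$ case. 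Apart from that, the proof is a direct assembly of Lemmas~\ref{l:intrinsic}, \ref{distance}, and \ref{l:volume}.
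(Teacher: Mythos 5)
Your proof is correct and follows essentially the same route as the paper: reduce to the one‑dimensional graph, use $\tilde{\Deg}(k)\asymp k^\gamma$ and the identification $\tilde B_r(|x|)=A_N$ with $N\asymp (r+r_0)^{2/(2-\gamma)}$, then compare the resulting polynomial sum with its integral and take $p$‑th roots, treating $p=\infty$ separately. The extra paragraph on uniformity of the $\asymp$ constants in $p$ is a reasonable refinement but goes beyond what the paper records (the paper's statement is naturally read as an asymptotic for each fixed $p$), so it is not required for the argument.
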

	\begin{proof}
		The equality follows by definition.
		Recall that
		\begin{align*}
			s_{k}\asymp k^{\gamma}\quad \mbox{and}\quad \mathrm{Deg}(y)=(s_{k-1}+s_{k+1})\asymp k^{\gamma}
		\end{align*}
		for $ y\in S_{k} $ and $ k\ge0 $. Let $ p\in[1,\infty) $ and $ r_{0} =\rho(x,o)$. For  every $ r\ge r_0 $ there  is $c\le \theta\le C $ such that $ 	B_{r}(x)=B_{r+r_{0}}(o)=A_{R} $ with $ R=\theta (r+r_{0})^{ {2}/({2- \gamma})}  $. 		Thus, we have since $ m(B_{r}(x_{0}))= m(B_{r+r_{0}}(o))=  \tilde m(\tilde B_{r})\asymp (r+r_{0})^{\frac{2(\gamma+1)}{2-\gamma}} $
		\begin{align*}
			D_{p}(x,r)^{p}&=\frac{1}{m(B_{r+r_{0}}(o))}\sum_{k=1}^{R}\sum_{y\in S_{k}}\mathrm{Deg}(y)^{p}=\frac{1}{m(B_{r+r_{0}}(o))}\sum_{k=1}^{R}s_{k}(s_{k-1}+s_{k+1})^{p}\\
			&\asymp  (r+r_{0})^{-\frac{2(\gamma+1)}{2-\gamma}}  \int_{0}^{R}k^{(p+1)\gamma} dk \asymp   (r+r_{0})^{ \frac{2((p+1)\gamma+1)}{({2- \gamma})}-\frac{2(\gamma+1)}{2-\gamma}}			 \asymp r^{\frac{2p\gamma}{2-\gamma}}.
		\end{align*}
		This proves the asymptotics for $ p<\infty $.  For $ p=\infty $, one clearly obtains the bounds  $  D_{\infty}(x,r)=s_{\theta (r+r_{0})^{2/(2-\gamma)}}\asymp r^{2\gamma/(2-\gamma)}  $.
	\end{proof}
	
\eat{\begin{lemma}
	For $ \gamma\in (0,2) $, $ p\in (1,\infty)  $ with H\"older dual $ q $ and $ r_{x}=\rho(x,o) $ we have
	\begin{align*}
			\mu_x(r):=\left(\frac{m(B_x(r))}{r^d}\right)^q\asymp  \left(\frac{r_{x}}{r}\right)^{q(d-1)}.
	\end{align*}
\end{lemma}	
\begin{proof}
This follows directly from Lemma~\ref{l:volume}.
\end{proof}}
	
	Now, we collected all the ingredients to prove heat kernel estimates on anti-trees.
	
\begin{theorem}\label{thm:antitree1} For $ \gamma\in (0,2) $ and  $ n\ge 2d=\frac{4(\gamma+1)}{2-\gamma}> 2 $, there exists a constant $C>0$ such that for all $x,y\in X$ with either $ |x|\neq |y| $ or $ x=y $ and $t\geq 2\cdot 72^2$ we have  
	\begin{multline*}
		p_{t}(x,y)
		\leq 
		C
		\left(1+\frac{\rho(o,x)^2+\rho(o,y)^2}{t}\right)^{\frac{n}{2}}
				\cdot
		\frac{\left(1\vee (\sqrt{t^2+\rho(x,y)^2}-t)\right)^{\frac{n}{2}}}{m(B_{o}(\sqrt t))}\euler^{-\zeta_1(\rho(x,y),t)}.
	\end{multline*}
and with respect to the combinatorial distance for $t>2\max\{||x|-|y||^{2-\gamma},||x|-|y||^{\frac{2-\gamma}2}\}$
\begin{align*}
			p_{t}(x,y)
	\leq 
	C 
	\left(1+\frac{ |x|^{2({\gamma+1})}+|y|^{2(\gamma+1)}}{t^{d}}\right)
	\cdot \frac{ 1}{t^{\frac{d}{2}}}
	\euler^{-\frac{ ||x|-|y||^{2-\gamma}}{Ct}}.
\end{align*}
\end{theorem}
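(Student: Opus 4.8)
The strategy is to transfer the problem to the one-dimensional weighted graph $\tilde b$ over $(\NN_0,\tilde m)$ associated with the anti-tree and to apply Theorem~\ref{thm:main1} there with root $o=0$. By Lemmas~\ref{l:1d}, \ref{l:intrinsic} and \ref{l:volume} one has, for $|x|\neq|y|$ or $x=y$, the identities $p_t(x,y)=\tilde p_t(|x|,|y|)$, $\rho(o,x)=\tilde\rho(0,|x|)$, $\rho(x,y)=\tilde\rho(|x|,|y|)$ and $m(B_o(r))=\tilde m(\tilde B_0(r))$, so it suffices to prove both bounds for $\tilde p_t$ on $\tilde b$. Since $\tilde m\equiv 1$ the semigroup is sub-Markovian, hence $\tilde p_t(\cdot,\cdot)\leq 1$, and for $t<2\cdot 72^2$ the right-hand side of the combinatorial bound exceeds $C$ times a constant depending only on $\gamma$ (using $||x|-|y||^{2-\gamma}/t<1/2$ and $t^{-d/2}>(2\cdot 72^2)^{-d/2}$); thus that bound is automatic for such $t$ once $C$ is large, and we may assume $t\geq 2\cdot 72^2$ throughout.

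To apply Theorem~\ref{thm:main1} to $\tilde b$ I would take $p=\infty$, so that $q=1$, $\alpha=\beta=1+2/n$ and $M_\infty\equiv 1$, together with $R_1=72$ and $R_2=\infty$. The radius condition then reads $4R_1\geq 32S(\tfrac{\ln q}{\ln(\alpha/\beta)}+3)^2=288S$, which holds because the jump size of the intrinsic degree metric satisfies $S\leq 1$: every edge has an endpoint in a sphere $S_{k+1}$, $k\geq 0$, of degree $s_k+s_{k+2}\geq 2$, so $\rho$ on that edge is at most $2^{-1/2}$. Proposition~\ref{p:sobolev} supplies the Sobolev inequality $S(n,72,\infty)$ for every $n\geq 2d$, and Lemma~\ref{l:volume} supplies volume doubling $V(d,72,\infty)$ in $0$, so $SV(72,\infty)$ holds in $0$. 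Finally, the error function $\Gamma$ is bounded along the radii $r(t,z)=\rho(o,z)+\sqrt{t/2}\geq 72$ entering the conclusion: its base $(1+r^2D_\infty(r))\,m(\tilde B_0(r))$ grows only polynomially in $r$ by Lemmas~\ref{l:D} and \ref{l:volume}, whereas $\theta(r)=\tfrac12\beta^{-\lfloor\sqrt{r/4S}-2\rfloor}$ decays like $\beta^{-c\sqrt r}$, so $\theta(r)$ times the logarithm of that base tends to $0$ and $C_\Gamma:=\sup_{r\geq 72}\Gamma(r)<\infty$ depends only on $\gamma$ and $n$.

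Feeding these facts into Theorem~\ref{thm:main1} with $R_2=\infty$ — so that $t\wedge R_2^2=t$, $\sqrt t\wedge R_2=\sqrt t$, $B(R_2/4)=\NN_0$ and the factor $\euler^{-\Lambda(t-t\wedge R_2^2)}$ equals $1$ — bounds $\tilde p_t(|x|,|y|)$ for $t\geq 2\cdot 72^2$ by a constant (absorbing $C_\Gamma^2$) times $(1+\tfrac{\tilde\rho(0,|x|)^2+\tilde\rho(0,|y|)^2}{t})^{n/2}$, the $S$-dependent factor $(1\vee S^{-2}(\sqrt{t^2+\tilde\rho^2S^2}-t))^{n/2}$, $1/\tilde m(\tilde B_0(\sqrt t))$ and $\euler^{-\zeta_S(\tilde\rho,t)}$, where $\tilde\rho=\tilde\rho(|x|,|y|)$. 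To replace the jump size $S\leq 1$ by $1$ I would use $\zeta_S(r,t)=\tfrac1S\int_0^r\arsinh(Sv/t)\,\drm v$ together with $\arsinh(u)\geq u/\sqrt{1+u^2}$, which makes $S\mapsto\zeta_S(r,t)$ non-increasing so that $\euler^{-\zeta_S}\leq\euler^{-\zeta_1}$, and the identity $S^{-2}(\sqrt{t^2+r^2S^2}-t)=r^2/(\sqrt{t^2+r^2S^2}+t)\leq(3/S)(\sqrt{t^2+r^2}-t)$ to control the $S$-polynomial factor by its $S=1$ analogue up to a $\gamma$-dependent constant. Translating back through Lemmas~\ref{l:1d}, \ref{l:intrinsic} and \ref{l:volume} then gives the first displayed inequality with some $C=C(\gamma,n)$.

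For the combinatorial bound I would specialise $n=2d=\frac{4(\gamma+1)}{2-\gamma}>2$ and insert $\rho(o,x)^2\asymp|x|^{2-\gamma}$, $\rho(x,y)^2\asymp||x|-|y||^{2-\gamma}$ (Lemma~\ref{distance}) and $m(B_o(\sqrt t))\asymp t^{d/2}$ (Lemma~\ref{l:volume}). The hypothesis $t>2||x|-|y||^{2-\gamma}$ gives $\rho(x,y)^2\lesssim t$, whence $1\vee(\sqrt{t^2+\rho(x,y)^2}-t)\lesssim 1$, while $t>2||x|-|y||^{(2-\gamma)/2}$ keeps $\rho(x,y)/t$ bounded, so by concavity $\arsinh(v/t)\gtrsim v/t$ for $v\leq\rho(x,y)$ and $\zeta_1(\rho(x,y),t)\gtrsim\rho(x,y)^2/t\gtrsim||x|-|y||^{2-\gamma}/t$. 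Finally $d(2-\gamma)=2(\gamma+1)$ gives $\rho(o,x)^2/t\asymp(|x|^{2(\gamma+1)}/t^d)^{1/d}$, so $(1+\tfrac{\rho(o,x)^2+\rho(o,y)^2}{t})^{d}\lesssim 1+\tfrac{|x|^{2(\gamma+1)}+|y|^{2(\gamma+1)}}{t^d}$ by $(1+a+b)^d\leq 3^d(1+a^d+b^d)$, and collecting constants yields the second displayed inequality. The real mathematical content — Sobolev from isoperimetry, together with the isoperimetric and volume estimates for anti-trees — already sits in the preceding lemmas, so what remains is essentially bookkeeping; the most delicate points are matching the quantitative threshold $72$, establishing the uniform bound on $\Gamma$ (i.e.\ that $\theta$ decays fast enough against its polynomially growing base), and carrying out the jump-size rescaling $\zeta_S\to\zeta_1$ in the polynomial prefactor without distorting the shape of the estimate.
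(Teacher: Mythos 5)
Your proposal is correct and follows essentially the same route as the paper: reduce to the one-dimensional model $\tilde b$ via Lemmas~\ref{l:1d} and \ref{l:intrinsic}, verify $SV(R_1,\infty)$ from Proposition~\ref{p:sobolev} and Lemma~\ref{l:volume}, apply Theorem~\ref{thm:main1} with $R_1=72$, $R_2=\infty$, $S\le 1$, and a bounded $\Gamma$, then specialize to $n=2d$ and estimate $\rho$ by $|\cdot|^{(2-\gamma)/2}$ for the combinatorial bound. You supply some details the paper elides (the uniform bound on $\Gamma$, the rescaling $\zeta_S\to\zeta_1$, and the small-$t$ case of the second inequality via $\tilde p_t\le 1$); the only slip is writing $\alpha=\beta=1+2/n$, which renders $\ln q/\ln(\alpha/\beta)$ indeterminate — this traces to a typo in Theorem~\ref{thm:main1}'s formula for $\beta$, which should read $\beta=1+1/(n\vee 2q)$ as in Proposition~\ref{prop:l2meanvalue}, whence $\alpha>\beta$ and the radius condition $R_1\ge 72S$ comes out cleanly.
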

\begin{proof}By Lemma~\ref{l:intrinsic} and Lemma~\ref{l:1d}, it suffices to prove the statement for the one-dimensional kernel $ \tilde p $ with intrinsic metric $ \tilde \rho $.
	
 We check the assumptions of Theorem~\ref{thm:main1}. First of all, since $ \tilde m(k)=s_{k}\ge 1 $, we have $ \tilde M_{p}(R)\leq 1  $ for all $ p \in (1,\infty)$ and $ R \ge0 $. Furthermore, $ \tilde D_{p}(R) $ is polynomially bounded by Lemma~\ref{l:D}. Volume doubling $ VD(d,1,\infty) $ is satisfied by Lemma~\ref{l:volume} with $ d=2(\gamma+1)/(2-\gamma) $. Moreover, the Sobolev inequality $ S(n,1,\infty) $ is satisfied by Proposition~\ref{p:sobolev}. Hence, we have $ SV (1,\infty)$. 
 
 Now, the first result follows from  Theorem~\ref{thm:main1} since the jump size of $ \rho $ can be estimated by $ S\le 1 $.
 
 For the second statement with respect to the combinatorial graph metric, we choose $ n=2d $.
 For the first term on the right hand side, we use $ (1+\alpha)^{d}\leq 2^{d-1}(1+\alpha^{d}) $ and  we estimate the enumerator of the term in the middle by $ 1 $. Furthermore, we use $ \zeta (r,t)\geq \frac{r^{2}}{C t} $ for $t>c\rho(x,y) $, cf.~\cite[p.~214]{Delmotte-99}, and employ Lemma~\ref{distance} to estimate $ \rho  $ by $ |\cdot|^{(2-\gamma)/2} $.
\end{proof}

\begin{theorem}\label{thm:antitree2}Let $ \gamma\in (0,2) $, and $ n\ge 2d \frac{4(\gamma+1)}{2-\gamma}> 2 $. There exists $C_0,R_0>0$  such that  for all $x,y\in X$ with either $ |x|\neq |y| $ or $ x=y $ and  $t\geq 8\max\{\rho(x,o),\rho(y,o) ,R_{0}\}^{2}$ we have
		\begin{align*}
		p_{t}(x,y)
		&\leq 
	 C 
		\frac{ \left(1\vee\left(\sqrt{t^2+\rho(x,y)^2}-t\right)\right)^{\frac{n}{2}}}
		{\sqrt{m(B_{x}(\sqrt {t}))m(B_{y}(\sqrt {t}))}}
		\euler^{-\zeta_1\left(\rho(x,y),t\right)}.
	\end{align*}
\end{theorem}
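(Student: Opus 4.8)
The plan is to deduce the estimate directly from the non-anchored bound \cite[Theorem~6.1]{KellerRose-22a}, which requires Sobolev and volume doubling on large balls \emph{around the two endpoints} $x,y$ rather than around the root; the work is then to reduce to the one-dimensional model and to supply these two properties centered at arbitrary vertices.

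First I would use Lemma~\ref{l:1d} and Lemma~\ref{l:intrinsic} to replace $p_t(x,y)$, $\rho$ and $m(B_x(r))$ by $\tilde p_t(|x|,|y|)$, the intrinsic degree metric $\tilde\rho$, and $\tilde m(\tilde B_r(|x|))$ on the model graph $\tilde b$ over $(\NN_0,\tilde m)$ --- this is exactly where the hypothesis ``$|x|\neq|y|$ or $x=y$'' enters --- and record that the jump size satisfies $S\le 1$, so that $\zeta_S=\zeta_1$. It then suffices to verify the property $SV(R_1,\infty)$ around each of the vertices $|x|,|y|\in\NN_0$, for a threshold $R_1$ proportional to $1\vee\rho(o,x)$, respectively $1\vee\rho(o,y)$ (recall $\rho(o,x)=\tilde\rho(0,|x|)$). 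Volume doubling $V(d,1\vee\rho(o,x),\infty)$ at $|x|$ is precisely the content of Lemma~\ref{l:volume}. For the Sobolev inequality $S(n,1\vee\rho(o,x),\infty)$ at $|x|$ I would run the proof of Proposition~\ref{p:sobolev} verbatim, with $\tilde B_r(|x|)$ in place of $\tilde B_r(0)$: the isoperimetric bound $\tilde h_{\tilde B_r(|x|),n}\asymp\tilde m(\tilde B_r(|x|))^{2/n}/r$ of Lemma~\ref{lem:isoanti} holds for all $r\ge 1\vee\rho(o,x)$, and feeding it into Theorem~\ref{t:sobolev} with $C=r$ yields the required Sobolev inequality.

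With $SV$ now available around both $|x|$ and $|y|$ for all radii $\ge 1\vee\max\{\rho(o,x),\rho(o,y)\}$, I would apply \cite[Theorem~6.1]{KellerRose-22a} with $R_2=\infty$. Its admissibility condition on the radius produces a constant $R_0=R_0(n,q)$, and evaluating the resulting estimate at scale $\sqrt t$ forces $\sqrt t$ to exceed a fixed multiple of $\max\{\rho(o,x),\rho(o,y),R_0\}$; this gives the time restriction $t\ge 8\max\{\rho(x,o),\rho(y,o),R_0\}^2$ and puts us in the regime $\sqrt t\ge\rho(x,o),\rho(y,o)$ of Lemma~\ref{l:volume}, where $m(B_x(\sqrt t))\asymp t^{d/2}$, i.e.\ away from the small-radius behaviour. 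It remains to absorb the error function of \cite{KellerRose-22a} into the constant. Since $\tilde m\ge 1$ we have $\tilde M_p(r)\le 1$ for every $r$; by Lemma~\ref{l:D}, $\tilde D_p(r)\asymp r^{2\gamma/(2-\gamma)}$, and $\tilde m(\tilde B_r)\asymp r^d$, so the bracket $(1+r^2\tilde D_p(r))\tilde M_p(r)^q\tilde m(\tilde B_r)^q$ is bounded by a fixed power of $r$; raising it to the exponent $\theta(r)=\tfrac12\beta^{-\kappa(r)}$, which decays like $\exp(-c\sqrt r)$, yields a quantity bounded uniformly in $r$, hence in $x,y,t$. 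Collecting constants gives the asserted $C$.

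The step I expect to be the main obstacle is the Sobolev inequality centered at a non-root vertex: Proposition~\ref{p:sobolev} as stated only treats balls about the root, and one must check that the argument of Lemma~\ref{lem:isoanti} --- which reduces the infimum over $W\subseteq\tilde B_r(|x|)$ to root-balls $B_R(o)$ with $R\le\rho(o,x)+r$ --- genuinely produces a lower bound for $\tilde h_{\tilde B_r(|x|),n}$ with implied constants independent of $x$, so that the threshold $1\vee\rho(o,x)$ and the time condition line up cleanly. A secondary, purely bookkeeping point is tracing how the radius-admissibility hypothesis of \cite[Theorem~6.1]{KellerRose-22a} turns into the explicit constant $R_0$ and the numerical factor $8$ in the time restriction.
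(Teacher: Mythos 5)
Your proposal follows essentially the same route as the paper: reduce to the one‑dimensional model via Lemmas~\ref{l:1d} and \ref{l:intrinsic}, note $S\le 1$, establish $SV(r,\infty)$ around $|x|$ and $|y|$ from Lemma~\ref{l:volume} and the isoperimetric estimate of Lemma~\ref{lem:isoanti} fed into Theorem~\ref{t:sobolev}, apply \cite[Theorem~6.1]{KellerRose-22a}, and absorb the error factor $\Gamma$ into the constant by polynomial growth of $\tilde D$, $\tilde M$, $\tilde m(\tilde B_r)$ against the stretched-exponential decay of $\theta(r)$. Your worry about the Sobolev inequality at a non-root center is well-placed and resolved exactly as you describe, since Lemma~\ref{lem:isoanti} is already stated for $\tilde B_r(|x|)$ with $r\ge 1\vee\rho(o,x)$; the one small point you leave implicit, and the paper makes explicit, is that polynomial volume growth forces $\Lambda=\inf\spec(\Delta)=0$ (citing \cite{HaeselerKW-13}), which is needed so that the $\euler^{-\Lambda(\cdot)}$ factor in the cited theorem drops out cleanly.
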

\begin{proof}
Again 	by  Lemma~\ref{l:intrinsic}  and Lemma~\ref{l:1d}, it suffices to prove the statement for the one-dimensional kernel $ \tilde p $ and $ \tilde \rho $.
	
We apply \cite[Theorem~6.1]{KellerRose-22a}. We choose $\beta=1+1/n$, $ p=\infty $ and $ R_{0}=72 $. Then, for $x,y $ we have $ SV(r,\infty) $ for $ r\ge r_{x}\vee r_{y}\vee R_{0}  $ by Lemma~\ref{l:volume} and Proposition~\ref{p:sobolev}. 

The error terms $$  \Gamma_{x}(r)=[(1+r^2\tilde D_\infty(r))
\tilde M_\infty(r)\tilde m( \tilde B(r))]^{\theta(r)}  $$ 
are bounded since $\tilde D_{\infty}(r)$ , $ \tilde M_{\infty} (r)$, $m( \tilde B(r))$ are polynomially growing and we have  $\theta(r)\asymp \euler^{-\gamma\sqrt{r}}$. The polynomial volume growth yields $ \Lambda=0 $, cf.~\cite{HaeselerKW-13}. Moreover, the jump size can be bounded by $ S\leq 1 $. Hence, the result can be read from \cite[Theorem~6.1]{KellerRose-22a}.	
\end{proof}

{\noindent\textbf{Acknowledgements.} The authors acknowledge the financial support of the DFG. }
\appendix
\section{Appendix}\label{appendix}

\begin{proof}[Proof of Lemma~\ref{lem:decreasingfunction}]Let 
		\[
		g(T):=\frac{1}{\alpha}\int_{0}^{T}F(t)t^{\frac{1}{\alpha}-1}dt,\quad h(T):=\left(\int_{0}^{T}F(t)^{\alpha}dt\right)^{\frac{1}{\alpha}}.
		\]
		Then $g(0)=h(0)=0$, $g$ and $h$ are differentiable, and
		\[
		g'(T)=\frac{1}{\alpha} F(T)T^{\frac{1}{\alpha}-1}, \quad h'(T)=\frac{1}{\alpha}\left(\int_{0}^{T}F(t)^{\alpha}dt\right)^{\frac{1}{\alpha}-1}F(T)^\alpha.
		\]
		Since $F$ is non-increasing, we get
		\[
		h'(T)\geq \frac{1}{\alpha}F(T)^{\alpha\left(\frac{1}{\alpha}-1\right)}T^{\frac{1}{\alpha}-1}F(T)^\alpha=g'(T).
		\]
		Hence, $h(T)\geq g(T)$. Letting $T\to\infty$ yields the claim.
	\end{proof}

\bibliographystyle{alpha}

\begin{thebibliography}{HKW13}

\bibitem[ADS16]{AndresDS-16}
S.~Andres, J.-D. Deuschel, and M.~Slowik.
\newblock {Heat kernel estimates for random walks with degenerate weights}.
\newblock {\em Electron. J. Probab.}, 21:1--21, 2016.

\bibitem[BC16]{BarlowChen-16}
M.~Barlow and X.~Chen.
\newblock Gau{\ss{}}ian bounds and parabolic {H}arnack inequality on locally
  irregular graphs.
\newblock {\em Math. Ann.}, 366:1677--1720, 2016.

\bibitem[BHY17]{BauerHuaYau-17}
F.~Bauer, B.~Hua, and S.-T. Yau.
\newblock Sharp {D}avies-{G}affney-{G}rigor'yan {L}emma on graphs.
\newblock {\em Math. Ann.}, 368(3):1429--1437, 2017.

\bibitem[BKW15]{BauerKW-15}
F.~Bauer, M.~Keller, and R.~K. Wojciechowski.
\newblock Cheeger inequalities for unbounded graph {L}aplacians.
\newblock {\em J. Eur. Math. Soc. (JEMS)}, 17(2):259--–271, 2015.

\bibitem[CKW20]{ChenKW-20b}
X.~Chen, T.~Kumagai, and J.~Wang.
\newblock Random conductance models with stable-like jumps: heat kernel
  estimates and {H}arnack inequalities.
\newblock {\em J. Funct. Anal.}, 279(7):108656, 51, 2020.

\bibitem[Dav93a]{Davies-93a}
E.B. Davies.
\newblock Analysis on {G}raphs and {N}oncommutative {G}eometry.
\newblock {\em J. Funct. Anal.}, 111(2):398--430, 1993.

\bibitem[Dav93b]{Davies-93}
E.B. Davies.
\newblock Large deviations for heat kernels on graphs.
\newblock {\em J. London Math. Soc.}, 47(2):65--72, 1993.

\bibitem[Del99]{Delmotte-99}
T.~Delmotte.
\newblock Parabolic {H}arnack inequality and estimates of {M}arkov chains on
  graphs.
\newblock {\em Rev. Mat. Iberoam.}, 15(1):181–--232, 1999.

\bibitem[Fol11]{Folz-11}
M.~Folz.
\newblock Gaussian {U}pper {B}ounds for {H}eat {K}ernels of {C}ontinuous {T}ime
  {S}imple {R}andom {W}alks.
\newblock {\em Electron. J. Probab.}, 16:1693--1722, 2011.

\bibitem[Fol14]{Folz-14}
M.~Folz.
\newblock Volume growth and stochastic completeness of graphs.
\newblock {\em Trans. Amer. Math. Soc.}, 366(4):2089--2119, 2014.

\bibitem[Gri09]{Grigoryan-09}
A.~Grigor'yan.
\newblock {\em Heat {K}ernel and {A}nalysis on {M}anifolds}.
\newblock AMS/IP studies in advanced mathematics. American Mathematical
  Society, 2009.

\bibitem[HKS20]{HuangKS-20}
X.~Huang, M.~Keller, and M.~Schmidt.
\newblock On the uniqueness class, stochastic completeness and volume growth
  for graphs.
\newblock {\em Trans. Amer. Math. Soc.}, 373:8861--8884, 2020.

\bibitem[HKW13]{HaeselerKW-13}
S.~Haeseler, M.~Keller, and R.~K. Wojciechowski.
\newblock Volume growth and bounds for the essential spectrum for {D}irichlet
  forms.
\newblock {\em J. London Math. Soc.}, 88(3):883--898, 09 2013.

\bibitem[Hua11]{Huang}
X~Huang.
\newblock {\em On stochastic completeness of graphs}.
\newblock PhD thesis, University of Bielefeld, 2011.

\bibitem[KLW13]{KellerLW-13}
M.~Keller, D.~Lenz, and R.~K. Wojciechowski.
\newblock Volume growth, spectrum and stochastic completeness of infinite
  graphs.
\newblock {\em Math. Z.}, 274(3-4):905--932, 2013.

\bibitem[KLW21]{KellerLW-21}
M.~Keller, D.~Lenz, and R.~Wojciechowski.
\newblock {\em Graphs and {D}iscrete {D}irichlet {S}paces}, volume 358 of {\em
  Grundlehren der mathematischen Wissenschaft}.
\newblock Springer, 2021.

\bibitem[KR22]{KellerRose-22a}
M.~Keller and C.~Rose.
\newblock {G}aussian upper bounds for heat kernels on graphs with unbounded
  geometry.
\newblock 2022.
\newblock Preprint, arXiv:2206.04690[math.AP].

\bibitem[MO16]{MourratOtto-16}
J.-C. Mourrat and F.~Otto.
\newblock Anchored {N}ash inequalities and heat kernel bounds for static and
  dynamic degenerate environments.
\newblock {\em J. Funct. Anal.}, 270(1):201--228, 2016.

\bibitem[Woj11]{Wojciechowski11}
R.~K. Wojciechowski.
\newblock Stochastically incomplete manifolds and graphs.
\newblock In {\em Random walks, boundaries and spectra}, volume~64 of {\em
  Progr. Probab.}, pages 163--179. Birkh\"{a}user/Springer Basel AG, Basel,
  2011.

\bibitem[Zhi13]{Zhikov-13}
V.~V. Zhikov.
\newblock Estimates of the {N}ash–{A}ronson type for degenerating parabolic
  equations.
\newblock {\em J. Math. Sci. (N.Y.)}, 190:66--79, 2013.

\end{thebibliography}

\end{document}